\newcommand{\vertiii}[1]{{\left\vert\kern-0.25ex\left\vert\kern-0.25ex\left\vert #1 
    \right\vert\kern-0.25ex\right\vert\kern-0.25ex\right\vert}}		
\newcommand{\e}{\varepsilon}		
\newcommand{\Hg}{\mathcal{H}_g}	
\newcommand{\w}{\rightharpoonup}		
\DeclareMathOperator{\dist}{dist}		
\DeclareMathOperator{\ind}{ind}		
\DeclareMathOperator{\Div}{div}		
\newtheorem{thm}{Theorem}[section]
\newtheorem{prop}[thm]{Proposition}
\newtheorem{lem}[thm]{Lemma}
\newtheorem{cor}[thm]{Corollary}
\theoremstyle{definition}
\numberwithin{equation}{section}
\title[On minimizers of the 2D Ginzburg-Landau energy with tangential anchoring]{On minimizers of the 2D Ginzburg-Landau energy with tangential anchoring}
\author[Alama, Bronsard and van Brussel]{Stan Alama, Lia Bronsard and Lee van Brussel}
\address{Department of Mathematics and Statistics, McMaster University, Hamilton, ON, Canada}
\email{alama@mcmaster.ca, bronsard@mcmaster.ca, vanbrulw@mcmaster.ca}
\date{\today}
\begin{document}
\maketitle


\begin{abstract}
We analyze Ginzburg--Landau minimization problems in two dimensions with either a ``strong or weak" tangential boundary condition. These problems are motivated by experiments in liquid crystal with boundary defects. In the singular limit when the correlation length tends to zero, we show that boundary defects will be observed for weak anchoring, while both boundary and interior vortices are possible for strong anchoring in the first order limit. 
\end{abstract}


\section{Introduction}\label{sec:intro}
In this paper we study minimizers of two variational problems motivated by the study of defects in a nematic liquid crystal.  We consider a two-dimensional setting, related to a thin-film reduction of the three dimensional Landau--de Gennes model to two dimensions.  The special feature we are interested in comes from the work of Volovik and Lavrentovich \cite{volovik1983topological} where nematic drops are placed in an isotropic medium, allowing for the control of nematic boundary behaviour. In this way, the liquid crystal and its associated defect dynamics are studied as the nematic boundary molecules are transformed from having a forced angle of $\alpha=\pi/2$ with respect to the unit normal $n$ to the boundary of the droplet to $\alpha=0$. In this paper, we return to the well-studied Ginzburg-Landau functional but with new tangential types of boundary conditions, inspired by this physical phenomena.

We begin by describing the variational problem in mathematical terms, and stating our main results in Theorems~\ref{thm:big1} and \ref{thm:big2}. We consider a two-dimensional, bounded, simply connected domain $\Omega\subset\mathbb{R}^2\cong\mathbb{C}$ representing the space occupied by the liquid crystal with $C^{3,\alpha}$-smooth boundary $\Gamma:=\partial\Omega$. Let $g:\Gamma\to\mathbb{S}^1$ be $C^{3,\alpha}$-smooth boundary data with positive degree
\begin{equation*}
\mathcal{D}:=\deg(g;\Gamma)>0.
\end{equation*}
A natural example is to choose $g$ to parametrize the (positively oriented) unit tangent vector to $\partial \Omega$, but this need not be the case.
In order to force the order parameter $u$ to be parallel (or close to parallel) with respect to $g$, we will be using two methods. The first method is to impose that $u$ have zero projection along a vector orthogonal to $g$, that is, impose the pointwise scalar product condition $\langle u,g^{\perp}\rangle=0$ on $\Gamma$. With this, we consider the Ginzburg--Landau energy defined for $H^1(\Omega;\mathbb{R}^2)$ mappings
\begin{equation*}
E_{\e}(u):=\frac{1}{2}\int_{\Omega}\left(|\nabla u|^2+\frac{1}{2\e^2}\left(1-|u|^2\right)^2\right)dx
\end{equation*}
where $\e>0$ and observe the behaviour of solutions to the \emph{strong tangential minimization problem}
\begin{equation}\label{minprobstrong}
\inf\left\{E_{\e}(u):u\in \Hg(\Omega):=\{u\in H^1(\Omega;\mathbb{R}^2):\langle u,g^{\perp}\rangle=0\ \mbox{on}\ \Gamma\}\right\},
\end{equation}
in the limit as $\e\to 0$. In this way, we can observe the topological defects associated to the limiting map by analyzing a sequence of energy minimizing configurations $\{u_{\e}\}$ where the nematic material is asked to be precisely (strongly) parallel to $g$ along $\Gamma$ for each $\e>0$. It turns out that asking such a condition to hold does not quite translate to a standard Dirichlet or Neumann problem for the associated Euler--Lagrange equations, but rather a mixture of the two within appropriate coordinates. To see this, we make the additional assumption that $g$ be defined on a tubular neighborhood 
\begin{equation*}
\mathcal{N}_{\Gamma}:=\{x\in\overline{\Omega}:\dist(x,\Gamma)<\delta\}
\end{equation*}
with $\delta>0$ is small. Using this assumption, there exists a natural decomposition for functions $u$ in $\mathcal{N}_{\Gamma}$ using the orthonormal frame $\{g(x),g^{\perp}(x)\}$ via
\begin{equation}\label{def:udecomp}
u=u_{\parallel}g+u_{\perp}g^{\perp}
\end{equation}
where $u_{\parallel}:=\langle u,g\rangle$ and $u_{\perp}:=\langle u,g^{\perp}\rangle$. In using this decomposition, we find that solutions to the strong tangential problem \eqref{minprobstrong} satisfy the Euler--Lagrange system
\begin{equation}\label{eq:ELstrong}
\left\{
\begin{alignedat}{2}
-\Delta u&=\frac{1}{\e^2}(1-|u|^2) u\quad && \mbox{in}\ \Omega,\\
u_{\perp}&=0\quad && \mbox{on}\ \Gamma,\\
\partial_nu_{\parallel}&=0\quad && \mbox{on}\ \Gamma.
\end{alignedat}
\right.
\end{equation}
The second method for enforcing parallelity is done through boundary energy penalization (see e.g. Moser \cite{moser2003}). Indeed, define
\begin{equation*}
E_{\e}^{g,s}(u):=E_{\e}(u)+\frac{1}{2\e^s}\int_{\Gamma}\langle u,g^{\perp}\rangle^2\,ds
\end{equation*}
where $s\in (0,1]$, so that solutions of the \emph{weak tangential minimization problem} 
\begin{equation}\label{minprobweak}
\inf\left\{E_{\e}^{g,s}(u):u\in H^1(\Omega;\mathbb{R}^2)\right\}
\end{equation}
are energetically induced to decrease their projection along $g^{\perp}$. By calculating the first variation for $E_{\e}^{g,s}$, it can be easily shown that minimizers $u_{\e}$ satisfy the weak anchoring system
\begin{equation}\label{eq:ELweak}
\left\{
\begin{alignedat}{2}
-\Delta u&=\frac{1}{\e^2}(1-|u|^2) u\quad && \mbox{in}\ \Omega,\\
\partial_nu&=-\frac{1}{\e^s}u_{\perp}g^{\perp}\quad && \mbox{on}\ \Gamma.
\end{alignedat}
\right.
\end{equation}
For either minimization problem, solutions to \eqref{minprobstrong} and \eqref{minprobweak} are guaranteed by the direct method from the calculus of variations. Moreover, it can also be shown that strong tangential minimizers, in some sense, are weak limits of solutions to a certain modified weakly tangential minimization problem. Therefore, both problems are naturally connected and it is reasonable to analyze the solutions of both.\\[0.5em]
It is well known from the literature (see Bethuel-Brezis-H\'elein \cite{bbh}, for example) that the local winding behaviour of minimizers about vortices and the global winding behaviour of the boundary data $g$ are directly linked to the energy of a minimizing configuration. Thus, given that our interest is in the observation of boundary defects, we must grasp, in some way, the winding behaviour of minimizers near boundary vortices. Indeed, when a defect is located in the interior, this winding is easily quantifiable by calculating the standard topological degree of the minimizer's normalization about a small circle centered at the defect. However, since a closed curve cannot be made about a boundary vortex, it is not immediately clear how one should proceed in this case. To combat this, we develop a topological quantity called the \emph{boundary index}, which essentially counts the net number of approximate $\pi$-rotations that are made from one side of the vortex to the other on the boundary. In this way, a boundary defect with an associated boundary index $d$ will resemble an interior vortex of degree $d$ cut in half, and thus carry a ``half-integer" degree (see Definition \eqref{def:bdryindex}). A rigorous construction of the boundary index is given in Section \ref{sec:wind}. Using the notion of the boundary index, the main results of this paper are summarized in Theorems \ref{thm:big1} and \ref{thm:big2}.
\begin{thm}\label{thm:big1}
Suppose $\{u_{\e}\}_{\e>0}$ is a sequence of solutions to \eqref{minprobstrong} with associated boundary function $g:\mathcal{N}_{\Gamma}\to\mathbb{S}^1$ of degree $\mathcal{D}=\deg(g;\Gamma)\geq 1$. Then there is a subsequence $\e_n\to 0$, a finite number of point singularities $\Sigma\subset\overline{\Omega}$ and a harmonic map $u_0\in H^1(\overline{\Omega}\setminus\Sigma;\mathbb{S}^1)$ such that
\begin{equation*}
u_{\e_n}\w u_0\quad \mbox{weakly in}\ H^1_{loc}(\overline{\Omega}\setminus\Sigma;\mathbb{R}^2)
\end{equation*}
with each defect contained in $\Sigma$ having either associated degree, or boundary index, equal to one. In the particular case where $\mathcal{D}=1$, then one and only one of the following scenarios hold:
\begin{enumerate}
\item$\Sigma=\{p\}$ with $p\in\Omega$,\\
\item $\Sigma=\{q_1,q_2\}$ with $q_1,q_2\in\Gamma$.
\end{enumerate}
\end{thm}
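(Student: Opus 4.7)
My plan follows the blueprint of Bethuel--Brezis--H\'elein, adapted to accommodate the mixed tangential boundary condition and the possibility of boundary defects. The first task is to establish sharp energy estimates of the form $E_\e(u_\e) = \pi \mathcal{D} |\log \e| + O(1)$. The upper bound is obtained by constructing an explicit competitor: fix $\mathcal{D}$ points $a_i \in \Omega$ and glue local vortex profiles $\rho(|x-a_i|/\e)\,e^{i\theta_i}$ to a smooth $\mathbb{S}^1$-valued extension of $g$, or alternatively place $2\mathcal{D}$ boundary defects of unit boundary index; either ansatz yields $E_\e \le \pi \mathcal{D}|\log \e| + C$. The matching lower bound is the heart of the matter and rests on a vortex ball construction of Sandier--Jerrard type adapted to a mixed interior/boundary setting: one simultaneously grows disks around interior bad points and half-disks around boundary bad points, producing a quantized bound $\pi \sum_i |d_i|\,|\log \e| - C$ where the $d_i$ are integer degrees for interior balls and half-integers (one-half of the boundary index) for boundary half-disks.

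\textbf{Extraction of the limit and quantization.} The matched bounds force the number of bad disks to be bounded independently of $\e$. Up to extracting a subsequence, they concentrate at a finite set $\Sigma \subset \overline{\Omega}$ and, on each compact $K \subset \overline{\Omega}\setminus \Sigma$, one has $|u_{\e_n}|\to 1$ uniformly together with a local $H^1$ bound on $u_{\e_n}$. A diagonal extraction then produces $u_{\e_n} \w u_0$ weakly in $H^1_{\mathrm{loc}}(\overline{\Omega}\setminus \Sigma;\mathbb{R}^2)$ with $|u_0|=1$ a.e. Passing to the limit in \eqref{eq:ELstrong} and invoking standard arguments shows $u_0$ is harmonic with values in $\mathbb{S}^1$ and, on each arc of $\Gamma\setminus\Sigma$, satisfies $(u_0)_\perp = 0$, so $u_0 = \pm g$ there. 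The quantization claim then follows by comparing the lower bound $\pi \sum_i |d_i|\,|\log\e|$ with the upper bound: since the signed sum of indices equals the topological invariant $\mathcal{D}>0$, minimality forces each $|d_i|=1$ with the common sign of $\mathcal{D}$.

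\textbf{The $\mathcal{D}=1$ dichotomy.} Writing $n_{\mathrm{int}}$ and $n_{\mathrm{bdry}}$ for the numbers of interior vortices (each of degree $+1$) and boundary defects (each of boundary index $+1$, contributing $\tfrac12$ to the total winding), the topological identity $n_{\mathrm{int}}+\tfrac12 n_{\mathrm{bdry}} = \mathcal{D} = 1$ has only two nonnegative integer solutions, $(1,0)$ and $(0,2)$, which are precisely the two scenarios stated in the theorem. The possibility of a single boundary defect of double index is ruled out by the unit quantization in the previous step.

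\textbf{Main obstacle.} The principal technical difficulty is the boundary vortex half-ball construction and the associated lower bound phrased in terms of the boundary index of Section~\ref{sec:wind}. Because the mixed conditions $u_\perp = 0$ and $\partial_n u_\parallel = 0$ do not permit a straightforward reflection across $\Gamma$ (which would reduce the problem to the interior case treated by BBH), one must work directly in the moving frame $\{g,g^\perp\}$ of \eqref{def:udecomp} and carefully track how the tangential/normal split interacts with the winding of $u$. Establishing that the sharp cost of a unit boundary index is $\pi|\log\e|$ (half that of an interior vortex) and pairing it with an ansatz that realises this cost is where the bulk of the work will be concentrated.
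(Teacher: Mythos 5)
Your overall architecture matches the paper's: an upper bound $\pi\mathcal{D}|\ln\e|+C$, an $\eta$-compactness/bad-ball covering, the boundary index together with the identity $\mathcal{D}=\sum_i d_i+\tfrac12\sum_j D_j$ of Proposition~\ref{prop:winding}, and a Sandier--Jerrard lower bound over interior balls and boundary half-balls. However, there is a genuine gap at the quantization step. The ball-construction lower bound is \emph{linear} in the degrees, $\pi\bigl(\sum_i|d_i|+\tfrac12\sum_j|D_j|\bigr)|\ln\e|-C$ (Lemma~\ref{lem:onsigmas}); comparing it with the upper bound and with \eqref{eq:windingidentity} yields only that every $d_i$ and $D_j$ is positive and that $\sum_i d_i+\tfrac12\sum_j D_j=\mathcal{D}$. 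It does \emph{not} yield $d_i=D_j=1$: a single interior vortex of degree $\mathcal{D}$, or a single boundary defect of index $2\mathcal{D}$, saturates the linear bound exactly. In particular, for $\mathcal{D}=1$ the configuration $\Sigma=\{q\}$ with boundary index $D=2$ costs $\tfrac{\pi}{2}\cdot 2\cdot|\ln\e|=\pi|\ln\e|$ to leading order and survives your comparison; you dismiss it as ``ruled out by the unit quantization in the previous step,'' but that previous step has not established unit quantization. The paper closes this with a separate ingredient: the \emph{quadratic} lower bound on annuli (Theorem~\ref{ineq:localanulus}, giving $\pi d^2\ln(R/r)$ and $\tfrac{\pi}{2}D^2\ln(R/r)$) combined with the refined upper bound $E_{\e_n}(u_{\e_n};\Omega_{\sigma})\leq\pi\mathcal{D}|\ln\sigma|+C$ of Corollary~\ref{ineq:energysigmabound}, applied on the annulus between scales $\e_n$ and $\sigma$ around each putative defect; for $D=2$ this forces $2\pi\ln(\sigma/\e_n)\leq\pi\mathcal{D}|\ln\sigma|+C$, a contradiction as $\e_n\to 0$. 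You need to add this annulus argument explicitly before the dichotomy can be concluded.

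Two smaller points. First, your claim that ``the sharp cost of a unit boundary index is $\pi|\log\e|$ (half that of an interior vortex)'' is internally inconsistent: the cost is $\tfrac{\pi}{2}|\ln\e|$ per unit boundary index (Lemma~\ref{lem:onsigmas}), which is precisely what makes two boundary defects energetically comparable to one interior vortex; the bookkeeping in your dichotomy paragraph is nevertheless correct. Second, for the strong problem the paper obtains the upper bound with no construction at all, simply from the inclusion $H^1_g(\Omega)\subset\mathcal{H}_g(\Omega)$ and Struwe's estimate for the Dirichlet minimizer; your explicit ansatz would work but is more than is needed there (the glued boundary-defect competitor is what the paper reserves for the weak problem).
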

For the last part of Theorem \ref{thm:big1}, we remind the reader that our primary motivation for studying this problem came from the topological observations made on 3D samples, by Volovik and Lavrentovich in \cite{volovik1983topological}. In particular, they found experimentally single interior hedgehog defect when molecules are asked to be normal to the boundary and a bipolar boojum pair when requiring tangential conditions. In either case, the normal and tangential boundary data are of degree one and thus our theoretical treatment of the problem weakly recovers this observation. To get a complete picture, a renormalized energy analysis would need to be conducted in order to show when one defect type is preferred over another. To this end, in the last section of this work, we provide a concrete example of strong tangential anchoring in the case $\Omega=B_1(0)$, the unit disc, with $g=\tau$ the positively oriented unit tangent vector to the boundary $\Gamma$,  to highlight that the boundary vortex pair may give the preferable energy minimizing configuration. Such a result in 2D would be a first step in obtaining theoretically results coinciding with the found experimental data.\\

Next we state our result in case of weak tangential boundary conditions:

\begin{thm}\label{thm:big2}
Suppose $\{u_{\e}\}_{\e>0}$ is a sequence of solutions to \eqref{minprobweak} with associated boundary function $g:\mathcal{N}_{\Gamma}\to\mathbb{S}^1$ of degree $\mathcal{D}=\deg(g;\Gamma)\geq 1$. Then there is a subsequence $\e_n\to 0$, a finite number of point singularities $\Sigma\subset\overline{\Omega}$ and a harmonic map $u_0\in H^1(\overline{\Omega}\setminus\Sigma;\mathbb{S}^1)$ such that
\begin{equation*}
u_{\e_n}\w u_0\quad \mbox{weakly in}\ H^1_{loc}(\overline{\Omega}\setminus\Sigma;\mathbb{R}^2)
\end{equation*}
with each defect contained in $\Sigma$ having associated degree or boundary index equal to one. If $s\in(0,1)$, it holds that $\Sigma\subset\Gamma$ with $|\Sigma|=2\mathcal{D}$.
\end{thm}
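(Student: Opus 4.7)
The plan is to parallel the strategy behind Theorem~\ref{thm:big1}, while quantifying the improvement that weak anchoring provides for boundary defects: when $s\in(0,1)$, the cost per boundary defect of index one drops from $(\pi/2)|\log\varepsilon|$ to $(\pi/2)\,s|\log\varepsilon|$, creating a strict asymmetry that pushes all singularities onto $\Gamma$.

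First I would construct a test function $v_\varepsilon$ having $2\mathcal{D}$ ``smeared'' boundary defects, with $|v_\varepsilon|=1$ everywhere in $\overline{\Omega}$. Unlike the sharp half-vortex profile used in the strong case, which forces a Ginzburg--Landau core of scale $\varepsilon$ and costs $(\pi/2)|\log\varepsilon|$ per defect, here each boundary defect is realized by a smooth $\pi$-rotation of $v_\varepsilon$ relative to $g$ spread over a boundary arc of length $\varepsilon^s$, so that $v_\varepsilon$ has no zeros and the Ginzburg--Landau potential vanishes identically. A Poisson-kernel computation (equivalently, the trace characterization of the $H^{1/2}$ seminorm) shows that the Dirichlet integral near each defect is concentrated in a half-disk of radius $O(1)$ with effective inner cutoff $\varepsilon^s$, contributing $(\pi/2)\,s|\log\varepsilon|+O(1)$, while the penalty $\varepsilon^{-s}\int_\Gamma \langle v_\varepsilon,g^\perp\rangle^2\,ds$ is $O(1)$ on each smeared arc. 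Summing over the $2\mathcal{D}$ defects yields
\[
E_\varepsilon^{g,s}(u_\varepsilon)\;\leq\;\mathcal{D}\pi\, s\,|\log\varepsilon|+C.
\]

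With this logarithmic bound, I would apply the Sandier--Jerrard vortex ball construction (adapted to boundary defects, as in Theorem~\ref{thm:big1}) to identify a finite singular set $\Sigma=\{p_i\}\cup\{q_j\}\subset\overline{\Omega}$ carrying interior degrees $d_i$ and boundary indices $m_j$, together with a weak $H^1_{loc}(\overline{\Omega}\setminus\Sigma)$ limit $u_0:\overline{\Omega}\setminus\Sigma\to\mathbb{S}^1$ that is harmonic. The penalty bound $\int_\Gamma u_{\varepsilon,\perp}^2\leq C\varepsilon^s|\log\varepsilon|\to 0$ forces $u_{0,\perp}=0$ on $\Gamma\setminus\Sigma$, whence the topological identity $\sum_i d_i+\tfrac{1}{2}\sum_j m_j=\mathcal{D}$. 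The matching lower bound
\[
E_\varepsilon^{g,s}(u_\varepsilon)\;\geq\;\pi\sum_i|d_i|\,|\log\varepsilon|+\frac{\pi}{2}\,s\sum_j|m_j|\,|\log\varepsilon|+O(1)
\]
splits into: the standard Jerrard--Sandier lower bound for interior vortices (whose Ginzburg--Landau core remains of size $\varepsilon$), and, for each boundary defect $q_j$, a boundary-layer argument balancing the Dirichlet integral on the half-annulus $\{\varepsilon^s<|x-q_j|<R\}$ against the penalty contribution on $\Gamma\cap B(q_j,\varepsilon^s)$.

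Combining the matching bounds with the topological identity and the elementary inequalities $|d_i|\geq d_i$, $|m_j|\geq m_j$ gives
\[
\pi(1-s)\sum_i|d_i|+(\text{nonnegative terms})\leq o(1),
\]
so that for $s<1$ we conclude $\sum_i|d_i|=0$ (no interior defects) and $m_j\geq 0$ for every $j$. A standard Bethuel--Brezis--H\'elein argument at each boundary point then rules out $m_j\geq 2$, giving $m_j=1$ for every $j$ and $|\Sigma|=\sum_j m_j=2\mathcal{D}$. The main technical obstacle is the sharp $(\pi/2)\,s|\log\varepsilon|$ constant in the boundary-defect lower bound, which requires a careful slicing of the boundary layer $\{\dist(\cdot,\Gamma)<\varepsilon^s\}$ together with a trace-type inequality, most naturally carried out in the tangential-normal frame $\{g,g^\perp\}$ from~\eqref{def:udecomp}.
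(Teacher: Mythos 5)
Your proposal is correct and follows essentially the same route as the paper: an $\mathbb{S}^1$-valued test map with $2\mathcal{D}$ boundary half-vortices whose phase jump is absorbed over a scale-$\e^s$ arc (giving the upper bound $\pi s\mathcal{D}|\ln\e|+C$), a Sandier/Jerrard-type lower bound of $\pi|d_i||\ln\e|$ per interior vortex versus $\tfrac{\pi s}{2}|D_j||\ln\e|$ per boundary defect, the index identity $\sum_i d_i+\tfrac12\sum_j D_j=\mathcal{D}$, and the algebraic comparison $\pi(1-s)\sum_i|d_i|\le O(1)/|\ln\e|$ forcing $\Sigma\subset\Gamma$ with all indices equal to one when $s<1$. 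The only cosmetic difference is that the paper establishes the index identity at fixed $\e$ via the squared map $w=(u_\e/|u_\e|)^2$ and the orientation cones provided by $\eta$-compactness, rather than by passing to the limit $u_{0,\perp}=0$; your phrasing should be understood as invoking that same machinery.
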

The primary takeaway of Theorem \ref{thm:big2} comes from the observation that the exponent $s\in(0,1]$ \emph{almost} completely dictates the allocation of defects in $\overline{\Omega}$. In particular, the Theorem states that independent of the winding behaviour of $g$ and the geometry of $\Omega$, giving vortices `more room' along the boundary (on the scale of $\e^s$ as opposed to $\e$) is enough for boundary vortex pairs to \emph{always} be energetically preferable when compared to interior vortices.

%

A related model is that of a thin ferromagnetic film as obtained in an appropriate limiting regime by DeSimone, Kohn, Muller and Otto \cite{desimone2002reduced}. This limiting ferromagnetic thin film was studied by Moser \cite{moser2003}, and by Kurzke \cite{kurzke2006boundary} in certain settings. In those problems, they impose  tangential weak anchoring conditions with $g=\tau$ the unit tangent, and find critical anchoring strength at which boundary vortices are favored over interior vortices. In our case, we also consider a strong tangential anchoring, and generalize their results to weak anchoring for any $g$.  More recently, Ignat-Kurzke \cite{IK} have obtained $\Gamma$-convergence results for the weak tangential anchoring problem using a notion of global Jacobian in a different limiting regime where interior vortices cost more energy than boundary vortices. In the context of polarization-modulated orthogonal smectic liquid crystal, Garcia-Cervera, Giorgi and Joo \cite{GCGJ} have studied boundary vortices in a square domain with mixed weak and strong boundary conditions on the edges.

In the work of Volovik and Lavrentovich \cite{volovik1983topological}  the topological dynamics of the nematic material are observed as the boundary molecules are changed from being parallel to the boundary to perpendicular, by varying the aperture  $\alpha$ of the cone formed by the molecule with the normal to the boundary. When the angle $\alpha=\pi/2$, a bipolar structure is noticed with two point defects occurring along the boundary called \emph{boojums}. At the other extreme with $\alpha=0$, a single interior hedgehog defect is realized. Using a vector-valued order parameter $u$ for modelling the molecular alignment of the liquid crystal, the authors note that a surface energy density proportional to
\begin{equation}\label{eq:VLcond}
[\langle u,n\rangle^2-\cos^2\alpha]^2
\end{equation}
can be used in comparison to an interior gradient energy for determining the energy preference of boojum defects to hedgehog defects and vice versa. A generalization of this setup was analyzed  by Alama, Bronsard and Golovaty in \cite{alama2020thin} where they replaced the boundary's normal vector $n$ in expression \eqref{eq:VLcond} with general smooth $\mathbb{S}^1$-valued boundary data $g$, possessing a positive associated winding number along the boundary, and restricting $\alpha\in(0,\pi/2)$, the relative angle made between $g$ and the order parameter $u$. In this present work, we aim to answer the question of how this generalization operates for the specific case of $\alpha=\pi/2$ using Ginzburg-Landau as a toy model for nematic material. In particular, we are interested in obtaining conditions for which minimizing configurations prefer boundary defects over interior defects in this setting.\\[0.5em]


The rest of the paper is organized as follows: in Section 2 we present  upper bounds for the energy of minimizers to each problem, as well as {\it a priori} pointwise bounds for all solutions of the associated Euler-Lagrange equations, adapted for our new settings. In Section 3, we present our $\eta$ compactness results adapted from Struwe \cite{struwe1994asymptotic} to handle each type of boundary conditions and use it to define the ``bad balls" for each type, and show that they are contained in a finite number of very small balls. Next in Section 4, we analyze the winding behaviour of minimizers around boundary defects and introduce our notion of boundary index and use it to obtain the important ``degree Proposition and Lemma" (Proposition~\ref{prop:winding} and Lemma~\ref{lem:addthemup}) which will be essential in proving the lower bound on the energy of boundary defects in terms of the degree of the boundary data.  In Section 5, we obtain an energy lower bound for each type of tangential conditions over the appropriate ball collections, and we put everything together and prove our two main theorems. Finally in Section 6, we present an example with strong tangential anchoring on the unit disc which suggest that two antipodal boundary defect would be favored.


\section{Preliminary Facts for Minimizers}\label{sec:prelim}
We begin by showing an important pointwise bound on solutions of the Euler--Lagrange systems  \eqref{eq:ELstrong} and \eqref{eq:ELweak}. The proof follows familiar lines, (see \cite{bbh, alama2015weak, alama2020thin}) and so we provide a sketch to highlight the differences with previous papers. 

\begin{lem}\label{lem:bounded}
Suppose $u$ is a solution of \eqref{eq:ELstrong} or \eqref{eq:ELweak}. Then $|u|\leq 1$ and there is a constant $C_0>0$ independent of $\e$ for which $\e|\nabla u|\leq C_0$ for all $x\in\Omega$.
\end{lem}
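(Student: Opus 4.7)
The plan is to establish the two bounds separately: $|u|\le 1$ via a maximum principle applied to $|u|^2$, and the gradient bound $\e|\nabla u|\le C_0$ via a rescaling argument combined with elliptic regularity.

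For the pointwise bound, I would compute
\[
\Delta |u|^2 = 2|\nabla u|^2 + 2u\cdot\Delta u = 2|\nabla u|^2 - \frac{2}{\e^2}(1-|u|^2)|u|^2,
\]
using the interior equation $-\Delta u = \e^{-2}(1-|u|^2)u$ common to \eqref{eq:ELstrong} and \eqref{eq:ELweak}. This is strictly positive wherever $|u|^2>1$, so $|u|^2$ is strictly subharmonic on $\{|u|^2>1\}$. Suppose for contradiction that $\max_{\overline{\Omega}}|u|^2>1$. The strong maximum principle rules out an interior maximizer, so any maximum point $x_0$ lies on $\Gamma$. For the strong problem \eqref{eq:ELstrong}, the decomposition $u=u_\parallel g+u_\perp g^\perp$ together with $u_\perp=0$ and $\partial_n u_\parallel=0$ on $\Gamma$ yields
\[
\partial_n |u|^2 = 2u_\parallel\partial_n u_\parallel + 2u_\perp\partial_n u_\perp = 0 \quad \text{on } \Gamma .
\]
For the weak problem \eqref{eq:ELweak}, the Robin condition $\partial_n u = -\e^{-s}u_\perp g^\perp$ gives
\[
\partial_n |u|^2 = 2u\cdot\partial_n u = -2\e^{-s}u_\perp\langle u,g^\perp\rangle = -2\e^{-s}u_\perp^2\le 0 \quad \text{on } \Gamma .
\]
In either case Hopf's lemma (applicable since $\Gamma$ is $C^{3,\alpha}$ and $|u|^2>1$ in a one-sided neighborhood of $x_0$ by continuity) forces $\partial_n|u|^2(x_0)>0$, a contradiction. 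Hence $|u|\le 1$.

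For the gradient bound I would rescale: fix $x_0\in\overline{\Omega}$ and let $v(y):=u(x_0+\e y)$ on a (half-)ball of unit radius. Then $|v|\le 1$ and $-\Delta v=(1-|v|^2)v$, so $\|\Delta v\|_{L^\infty}\le 2$. If $\dist(x_0,\Gamma)\ge \e$, interior $C^{1,\alpha}$ (or $W^{2,p}$) elliptic estimates on $B_1(0)$ give $|\nabla v(0)|\le C$, i.e. $\e|\nabla u(x_0)|\le C$. If $x_0$ lies within $\e$ of $\Gamma$, I would straighten the boundary by a local $C^{3,\alpha}$ chart and rewrite the rescaled boundary conditions. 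For the strong problem, this yields a mixed Dirichlet/Neumann system for $(v_\perp,v_\parallel)$ in the frame $\{g,g^\perp\}$, which together with the $C^{3,\alpha}$ regularity of $g$ falls under standard mixed boundary regularity. For the weak problem, the Robin condition transforms into
\[
\partial_{n'}v(y) = -\e^{1-s}\,v_\perp(y)\,g^\perp(x_0+\e y),
\]
whose coefficient $\e^{1-s}$ is uniformly bounded for $s\in(0,1]$, allowing standard Robin boundary estimates to be applied uniformly in $\e$.

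The principal technical point is precisely the singular Robin coefficient $\e^{-s}$ in the weak case: naively it blows up as $\e\to 0$, and the argument succeeds only because rescaling extracts one power of $\e$ from the normal derivative, converting the factor into the harmless $\e^{1-s}\le 1$. A secondary nuisance is that the strong system is most naturally expressed in the moving frame $\{g,g^\perp\}$ only on the tubular neighborhood $\mathcal{N}_\Gamma$, so the boundary regularity estimates must be carried out componentwise in that frame and then transferred back to Cartesian coordinates; the $C^{3,\alpha}$ regularity of $\Gamma$ and of $g$ makes this routine but notationally heavy, which is presumably why the lemma is stated with only a sketch.
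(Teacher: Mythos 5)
Your proposal is correct, but it reaches both conclusions by a different route than the paper. For $|u|\le 1$ the paper argues integrally: it tests the equation against $uV_+$ with $V=|u|^2-1$ and integrates by parts, so that the only boundary input is the sign of $\int_\Gamma V_+\partial_n V\,ds$; you instead run the pointwise maximum principle on $|u|^2$ together with Hopf's lemma at a putative boundary maximum. The boundary computations are identical in substance ($\partial_n|u|^2=0$ under the strong condition, $\partial_n|u|^2=-2\e^{-s}u_\perp^2\le 0$ under the weak one), but your version needs $C^1$ regularity of $u$ up to $\Gamma$ and the interior-ball geometry for Hopf, whereas the integral identity works at the $H^1$ level. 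For the gradient bound the divergence is more substantial: the paper assumes $\e_k\|\nabla u_k\|_\infty\to\infty$ for contradiction, blows up at scale $1/\|\nabla u_k\|_\infty$, and invokes Liouville's theorem for the bounded harmonic limit on $\mathbb{R}^2$ or $\mathbb{R}^2_+$ (with a separate argument identifying the limiting boundary condition in each of the two cases), while you rescale directly at scale $\e$ and appeal to quantitative up-to-the-boundary elliptic estimates. Your route is more direct and avoids the compactness step, at the price of verifying uniform Schauder/Calder\'on--Zygmund constants for the straightened, $\e$-dependent boundary problems (Robin, and the coupled Dirichlet/Neumann system in the moving frame with its $O(\e)$ coupling terms) --- exactly the ``notationally heavy'' step you flag. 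You also correctly isolate the one genuinely delicate point: the Robin coefficient $\e^{-s}$ becomes the harmless $\e^{1-s}\le 1$ after rescaling, which is the same mechanism by which the paper obtains the limiting Neumann condition $\partial_n v=0$ on $\partial\mathbb{R}^2_+$.
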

\begin{proof}
Define $V:=|u|^2-1$ and $V_+:=\max\{V,0\}$. Using the Euler--Lagrange equations and integrating by parts over $\Omega$ we obtain
\begin{equation*}\label{ineq:pwineq}
0\leq \int_{\Omega} |u|^2 V_+^2\,dx\leq \frac{1}{2}\int_{\Gamma}V_+\partial_n V\,ds-\frac{1}{2}\int_{\Omega}|\nabla V_+|^2\,dx.
\end{equation*}
If $u$ is a solution of \eqref{eq:ELstrong}, it follows that $V_+\equiv 0$. If $u$ is a solution of \eqref{eq:ELweak}, then
\begin{equation*}
\partial_nV=2\langle u,\partial_n u\rangle=-\frac{2}{\e^s}(u_{\perp})^2\leq 0
\end{equation*}
and we obtain $V_+\equiv 0$ again. Thus, $|u|\leq 1$ in $\Omega$.\\

The gradient bound can be obtained by contradiction: suppose that there exists sequences $\e_k\to 0$ and $x_k\in\overline{\Omega}$ so that $t_k:=|\nabla u_k(x_k)|=\|\nabla u_k\|_{\infty}$ satisfies $t_k\e_k\to \infty$ as $k\to\infty$. Let
\begin{equation*}
v_k(x):=u_k\left(x_k+\frac{x}{t_k}\right)
\end{equation*}
which is defined whenever $y:=x_k+x/t_k\in\Omega$. Likewise, define $h(x):=g(y)$ whenever $y\in\Gamma$. By the uniform bound on $u$ proven above and the choice of scaling, we have $\|v_k\|_{\infty}\leq 1$ and
\begin{equation}\label{eq:gradvbound}
|\nabla v_k(0)|=1\ \forall k.
\end{equation}
By the uniform bound $\| v_k\|_{\infty}\leq 1$ and using the fact that $v_k$ solves
\begin{equation*}
-\Delta v_k=\frac{1}{(t_k\e_k)^2}(1-|v_k|^2)v_k\quad\ \mbox{for}\ x\in t_k[\Omega-x_k],
\end{equation*}
we conclude $\Delta v_k\to 0$ uniformly on $\Omega$. There are two blow-up cases to consider. If along some subsequence $t_k\dist(x_k,\Gamma)\to +\infty$, then $v_k\to v$ with $v$ bounded and harmonic in all of $\mathbb{R}^2$, and hence constant, contradicting \eqref{eq:gradvbound}.\\

Suppose now $t_k\dist(x_k,\Gamma)$ is bounded uniformly so that the domains of $v_k$ converge to the half-space
\begin{equation*}
t_k[\Omega-x_k]\to\mathbb{R}^2_+\ \mbox{as}\ k\to\infty.
\end{equation*}
For each $k$, the weak tangential problem becomes
\begin{equation}\label{eq:weakrescale}
\left\{
\begin{alignedat}{2}
-\Delta v_k&=\frac{1}{(t_k\e_k)^2}(1-|v_k|^2) v_k\quad && \mbox{in}\ t_k[\Omega-x_k],\\[0.5em]
\partial_nv_k&=-\frac{1}{t_k\e_k^s}\langle v_k,h^{\perp}\rangle h^{\perp}\quad && \mbox{on}\ t_k[\Gamma-x_k],
\end{alignedat}
\right.
\end{equation}
while the strong tangential problem is written
\begin{equation}\label{eq:strongrescale}
\left\{
\begin{alignedat}{2}
-\Delta v_k&=\frac{1}{(t_k\e_k)^2}(1-|v_k|^2) v_k\quad && \mbox{in}\ t_k[\Omega-x_k],\\[0.5em]
\langle v_k,h^{\perp}\rangle&=0\quad && \mbox{on}\ t_k[\Gamma-x_k],\\[0.5em]
\partial_n\langle v_k,h\rangle&=0\quad && \mbox{on}\ t_k[\Gamma-x_k].
\end{alignedat}
\right.
\end{equation}
Both problems yield a bounded harmonic limit $v$ defined on $\mathbb{R}^2_+$ with $v_k\to v$ in $C^k_{loc}$. Since $v_k$ and $h$ are bounded uniformly, the normal derivative of system \eqref{eq:weakrescale} has the limit $\partial_n v_k\to 0$ as $k\to\infty$ and so the limiting harmonic map $v$ satisfies the Neumann condition $\partial_n v=0$ on $\partial\mathbb{R}^2_+$. By the reflection principle, there exists a bounded harmonic extension of $v$ to all of $\mathbb{R}^2$ which again contradicts \eqref{eq:gradvbound} by Liouville's theorem. \\

For system \eqref{eq:strongrescale}, the boundary data $h$ converges to a constant vector field on $\partial\mathbb{R}^2_+$ and the boundary conditions imply $\langle v,h^{\perp}\rangle=\partial_n \langle v,h\rangle=0$ along $\partial\mathbb{R}^2_+$. Let $\tilde{h}$ denote the extension of $h$ to $\overline{\mathbb{R}^2_+}$ and note that $\langle v,\tilde{h}\rangle$ is a harmonic scalar function defined on $\mathbb{R}^2_+$. By the reflection principle and Liouville's theorem, $\langle v,\tilde{h}\rangle$ extends to a constant function on $\mathbb{R}^2$. Next, since $\langle v,h^{\perp}\rangle=0$ on $\partial\mathbb{R}^2_+$ and $\langle v,\tilde{h}\rangle$ is constant, it must be that $\langle v,\tilde{h}\rangle=\pm|v|$ on all of $\mathbb{R}^2$ and thus $v$ is constant. Condition \eqref{eq:gradvbound} is contradicted once again giving $\e|\nabla u|\leq C_0$ for all $x\in\Omega$ where $C_0$ is a constant independent of $\e$.
\end{proof}


\begin{prop}\label{prop:upperbound}
If $u_{\e}$ is a strongly tangential minimizer for $E_{\e}$, then
\begin{equation}\label{ineq:upper2}
E_{\e}(u_{\e})\leq \pi \mathcal{D}|\ln \e|+C
\end{equation}
with $C>0$ a constant independent of $\e$. If $u_{\e}$ is a weakly tangential minimizer for $E_{\e}^{g,s}$, then there is a constant $C>0$ independent of $\e$ so that
\begin{equation}\label{ineq:upper}
E_{\e}^{g,s}(u_{\e})\leq \pi s\mathcal{D}|\ln \e|+C.
\end{equation}
\end{prop}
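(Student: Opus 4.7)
The plan is to prove each bound by exhibiting an explicit admissible test field and invoking the minimality of $u_\e$. The leading $|\ln\e|$ terms reflect the topological cost of accommodating $\deg(g;\Gamma)=\mathcal{D}$: strong anchoring forces $\mathcal{D}$ interior vortices, each contributing $\pi|\ln\e|$, whereas the weaker boundary penalty $\tfrac{1}{2\e^s}\int\langle u,g^\perp\rangle^2\,ds$ permits the topology to dissolve into $2\mathcal{D}$ boundary transitions of length $\e^s$, each costing only $\tfrac{\pi s}{2}|\ln\e|$.

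For \eqref{ineq:upper2} I would follow the Bethuel--Brezis--H\'elein prescription: fix distinct interior points $p_1,\dots,p_{\mathcal{D}}\in\Omega$ and a canonical $\mathbb{S}^1$-valued harmonic map $u_0\in C^\infty(\overline{\Omega}\setminus\{p_i\};\mathbb{S}^1)$ with $u_0=g$ on $\Gamma$ and $\deg(u_0;p_i)=1$, whose existence is standard since the degrees sum to $\mathcal{D}$. Setting
\begin{equation*}
v_\e(x):=\begin{cases}\dfrac{|x-p_i|}{\e}\,u_0(x),& x\in B(p_i,\e),\\[3pt] u_0(x),& x\in\Omega\setminus\bigcup_i B(p_i,\e),\end{cases}
\end{equation*}
one has $v_\e=g$ on $\Gamma$, so $v_\e\in\mathcal{H}_g(\Omega)$. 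The classical computation yields $\tfrac12\int_{\Omega\setminus\bigcup B(p_i,\e)}|\nabla u_0|^2=\pi\mathcal{D}|\ln\e|+O(1)$, while the gradient and potential contributions inside each $\e$-core are $O(1)$; minimality gives \eqref{ineq:upper2}.

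For \eqref{ineq:upper} the plan is to use a globally $\mathbb{S}^1$-valued test field (so that the potential vanishes identically) and to route the topology to $2\mathcal{D}$ boundary defects. Pick $2\mathcal{D}$ distinct points $q_1,\dots,q_{2\mathcal{D}}\in\Gamma$ in cyclic order and set $\ell:=\e^s$. On the arcs $I_j\subset\Gamma$ between consecutive $q_j$'s prescribe $w_\e=\sigma_j g$ with alternating signs $\sigma_j=(-1)^j$, and across each arc segment of length $\ell$ centered at $q_j$ smoothly rotate the phase of $w_\e$ by $-\pi$. The total winding of $\arg w_\e|_\Gamma$ is then $2\pi\mathcal{D}+2\mathcal{D}(-\pi)=0$, so this phase is single-valued on $\Gamma$; extend it harmonically into $\Omega$ to produce $\varphi_\e$, and set $w_\e:=e^{i\varphi_\e}$. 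In tubular coordinates near each $q_j$ the transition is a smoothed step of height $\pi$ whose harmonic extension on the half-plane has Dirichlet energy $\tfrac{\pi}{2}\log(1/\ell)+O(1)=\tfrac{\pi s}{2}|\ln\e|+O(1)$; summed over the $2\mathcal{D}$ transitions this produces the leading $\pi s\mathcal{D}|\ln\e|$ term, while the smooth $g$-winding on the arcs contributes only $O(1)$. The penalty is supported on the transition intervals of total length $2\mathcal{D}\ell$ with integrand bounded by $1$, so it is $\le\mathcal{D}=O(1)$. Minimality then yields \eqref{ineq:upper}.

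The delicate step is the alternating sign choice $\sigma_j=(-1)^j$: precisely this pattern makes $\arg w_\e|_\Gamma$ close up with zero net winding, eliminating the need for any interior vortex in $w_\e$. Any other choice would either fail to produce a continuous admissible field or would force extra interior vortices at cost $\pi|\ln\e|$ each, destroying the $s$-improvement. Once $w_\e|_\Gamma$ is arranged correctly, the per-transition bulk estimate reduces to the standard Dirichlet-energy computation for the harmonic extension of a smoothed $\pi$-step on the half-plane, whose $\tfrac{\pi}{2}\log$ scaling is classical.
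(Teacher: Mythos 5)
Your proposal is correct and follows essentially the same route as the paper: for the strong bound the paper observes that the Dirichlet class $H^1_g(\Omega)$ is contained in $\mathcal{H}_g(\Omega)$ and invokes Struwe's upper bound, which is precisely the BBH-type competitor with $\mathcal{D}$ interior vortices that you construct by hand, and for the weak bound the paper likewise builds an $\mathbb{S}^1$-valued competitor with $2\mathcal{D}$ alternating-sign boundary half-vortices at scale $\e^s$, each costing $\tfrac{\pi s}{2}|\ln\e|+O(1)$. The only cosmetic difference is in assembly: the paper glues explicit local polar-coordinate phase interpolations to a degree-zero $\mathbb{S}^1$-valued harmonic map on the complement, whereas you prescribe the full boundary phase (with the same alternating $\pm g$ pattern and $-\pi$ transitions) and take its harmonic extension; the energy and penalty accounting is identical.
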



The proof of this proposition utilizes a local polar coordinate system near the boundary which is defined in \cite{alama2015weak, alama2020thin, kurzke2006boundary} and which we will use throughout the paper. For convenience, we provide a brief description here. Let $x_0\in\overline{\Omega}$ and $R>0$. Set
\begin{equation*}
\omega_R(x_0):=B_R(x_0)\cap\Omega
\end{equation*}
and in the case where $x_0\in\Gamma$, define
\begin{equation*}
\Gamma_R(x_0):=\overline{\omega_R(x_0)}\cap\Gamma.
\end{equation*}
Whenever $x_0\in\Gamma$, $\tau(x_0)$ will denote the positively oriented unit tangent vector to $\Gamma$ at $x_0$. Using $\tau(x_0)$ as a reference, the polar coordinates $(r,\theta)$ centered at $x_0$ can be defined so that $\theta$ is the angle measured from the ray defined by $\tau(x_0)$ and $r=|x-x_0|$. By the smoothness of $\Gamma$, note that $R$ can be chosen small enough so that 
\begin{align*}
\omega_R(x_0)=\left\{(r,\theta):\theta_1(r)<\theta<\theta_2(r),\ 0<r<R\right\}
\end{align*}
where $\theta_1(r)$ and $\theta_2(r)$ are smooth functions satisfying
\begin{align}\label{bounds:theta}
|\theta_1(r)|,\ |\pi-\theta_2(r)|\leq cr
\end{align}
for some constant $c=c(\Gamma)\geq 0$. These coordinates allow us to parametrize $\Gamma_R(x_0)\setminus\{x_0\}$ in two pieces:
\begin{equation*}
\Gamma_R^+(x_0):=\{(r,\theta_1(r)):0<r<R\},\quad \Gamma_R^-(x_0):=\{(r,\theta_2(r)):0<r<R\}.
\end{equation*}
Annular regions are defined similarly. For any $x_0\in\overline{\Omega}$ set
\begin{equation*}
A_{r_1,r_2}(x_0):=\omega_{r_2}(x_0)\setminus\overline{\omega_{r_1}(x_0)},\quad 0<r_1<r_2.
\end{equation*}
When $x_0\in\Gamma$ and $r_2>0$ is taken small enough, the intersection $\overline{A_{r_1,r_2}(x_0)}\cap \Gamma$ consists of two disjoint smooth arcs
\begin{equation}\label{eq:paramsgammas}
\begin{split}
\Gamma_{r_1,r_2}^+(x_0)&=\{(r,\theta_1(r)):r_1<r<r_2\},\\
\Gamma_{r_1,r_2}^-(x_0)&=\{(r,\theta_2(r)):r_1<r<r_2\},
\end{split}
\end{equation}
where $\theta_1(r)$ and $\theta_2(r)$ are as in \eqref{bounds:theta}. For notational convenience, we also set
\begin{equation*}
\Gamma_{r_1,r_2}^{\pm}(x_0):=\overline{A_{r_1,r_2}(x_0)}\cap\Gamma=\Gamma_{r_1,r_2}^+(x_0)\cup \Gamma_{r_1,r_2}^-(x_0).
\end{equation*}
Lastly, we define a localized energy on subsets $\omega_r(x_0)$ by
\begin{align*}
E_{\e}(u;\omega_r)&:=\frac{1}{2}\int_{\omega_r}\left(|\nabla u|^2+\frac{1}{2\e^2}(1-|u|^2)^2\right)dx,\\
E_{\e}^{g,s}(u;\omega_r)&:=E_{\e}(u;\omega_r)+\frac{1}{2\e^s}\int_{\Gamma\cap\overline{\omega_r}}\langle u,g^{\perp}\rangle^2\,ds.
\end{align*}


\begin{proof}[Proof of Proposition \ref{prop:upperbound}]\ \\[0.5em]
For inequality \eqref{ineq:upper2}, the desired bound is a consequence of \cite[Lemma 2.1]{struwe1994asymptotic}. Let $v_{\e}$ be a minimizer for $E_{\e}$ over $H^1_{g}(\Omega)=\{v\in H^1(\Omega;\mathbb{R}^2):v=g\ \mbox{on}\ \Gamma\}$. The inclusion $H^1_{g}(\Omega)\subset \mathcal{H}_g(\Omega)$ implies  $E_{\e}(u_{\e})\leq E_{\e}(v_{\e})$ and applying \cite[Lemma 2.1]{struwe1994asymptotic} to $E_{\e}(v_{\e})$ yields
\begin{equation*}
E_{\e}(u_{\e})\leq E_{\e}(v_{\e})\leq \pi\mathcal{D}|\ln\e|+C.
\end{equation*}
For weakly tangential minimizers, a test function is constructed following \cite[Lemma 3.1]{alama2015weak} and \cite[Proposition 3.1]{kurzke2006boundary}. Consider $2\mathcal{D}$ sets of the form $\omega_R(q_j)$ where $\{q_j\}_{j=1}^{2\mathcal{D}}$ are well-separated points on $\Gamma$ and $R$ is chosen so that 
\begin{equation*}
2\e^s<R<\frac{1}{2}|q_i-q_i| 
\end{equation*}
for all indices $i\neq j$. Assume that the points $\{q_j\}_{j=1}^{2\mathcal{D}}$ are labeled such that $q_{j+1}$ is the first point found by following the positively oriented tangent vector field along $\Gamma$ starting from $q_{j}$. These points partition $\Gamma$ into $2\mathcal{D}$ smooth segments $C_j$ in the sense that $\Gamma=\cup_{j=1}^{2\mathcal{D}}C_j$ with $C_j$ being the curve connecting $q_j$ and $q_{j+1}$. Let $\gamma$ be a lifting of $g$ on the curve $\Gamma_R(q_j)$, that is, $g=e^{i\gamma}$ on $\Gamma_R(q_j)$, and define
\begin{align*}
h_1(r)&=\gamma\left(re^{i\theta_1(r)}\right)+(j-1)\pi,\\[0.5em]
h_2(r)&=\gamma\left(re^{i\theta_2(r)}\right)+j\pi,\\[0.5em]
\phi(r,\theta)&=\frac{h_2(r)-h_1(r)}{\theta_2(r)-\theta_1(r)}(\theta-\theta_1(r))+h_1(r),
\end{align*}
where $\theta_1(r)$ and $\theta_2(r)$ are as in \eqref{bounds:theta}. In this way we have $e^{i\phi(r,\theta)}=g$ on $\Gamma_R^+(q_j)$ for $j$ odd and on $\Gamma_R^-(q_j)$ for $j$ even. Similarly, we get $e^{i\phi(r,\theta)}=-g$ for the opposite parities.  Next, let $\eta_{\e}(r)\in C^{\infty}$ be a cut-off function near $q_j$ satisfying $0\leq \eta_{\e}\leq 1$ for all $r$, $\eta_{\e}(r)=1$ for $r\geq 2\e^s$ and $\eta_{\e}(r)=0$ for $r<\e^s$. In setting
\begin{equation*}
\psi_j(r,\theta):=\eta_{\e}(r)\phi(r,\theta)+(1-\eta_{\e}(r))(\gamma(q_j)+(j-1)\pi)
\end{equation*}
we may define the $\mathbb{S}^1$-valued test function $v_{\e}^{(j)}=e^{i\psi_j(r,\theta)}$ on $\omega_{R}(q_j)$, which by construction, simulates a half-vortex in the annular region $A_{2\e^s,R}(q_j)$. Using the properties of the cut-off function along with Cauchy-Schwarz and the fact that $v_{\e}^{(j)}$ is $\mathbb{S}^1$-valued,
\begin{align*}
\frac{1}{2\e^2}\int_{\omega_R(q_j)}(1-|v_{\e}|^2)^2\,dx=0\quad \mbox{and}\quad \frac{1}{2\e^s}\int_{\Gamma_R(q_j)}\langle v_{\e},g^{\perp}\rangle^2\,ds\leq C
\end{align*}
where $C>0$ is a constant independent of $\e$. 
The Dirichlet energy of $v_{\e}^{(j)}$ on $\omega_R(q_j)$, can be conveniently estimated using polar coordinates: it is a straightforward calculation to confirm that the radial component $\int_{\omega_{R}(q_j)}|\partial_{r}v_{\e}|^2\ dx$ is uniformly bounded in $\e$. To bound the angular energy over $\omega_{2\e^s}(q_j)$, we note that by \eqref{bounds:theta} and the smoothness of $\gamma$, there is a constant $c>0$ so that
\begin{align*}
|h_2(r)-h_1(r)|\leq \pi+cr,\quad \mbox{and}\quad |\theta_2(r)-\theta_1(r)|\geq \pi-cr.
\end{align*}
Thus, we have:
\begin{equation*}
\int_{\omega_{2\e^s}(q_j)}\frac{1}{r^2}|\partial_{\theta}v|^2\,dx\leq\frac{(\pi+cR)^2}{(\pi-cR)}\ln 2<\infty.
\end{equation*}
Therefore it must be that the primary energy contribution comes from the subset $A_{2\e^s,R}(q_j)$. Using the same estimates as above,
\begin{equation*}
\int_{\omega_{R}(q_j)}\frac{1}{r^2}|\partial_{\theta}v_{\e}|^2\,dx\leq \pi s|\ln\e|+C
\end{equation*}
for $C>0$ independent of $\e$ and so
\begin{equation*}
E_{\e}^{g,s}(v_{\e};\omega_R(q_j))\leq \frac{\pi}{2} s|\ln\e|+C
\end{equation*}
on each $\omega_R(q_j)$. Finally, we must connect these local test functions in a way that is independent of $\e$. Consider the punctured domain
\begin{equation*}
\tilde{\Omega}:=\Omega\setminus\bigcup_{j=1}^{2\mathcal{D}}\omega_R(q_j)
\end{equation*}
with boundary given by
\begin{equation*}
\tilde{\Gamma}:=\partial\tilde{\Omega}=\left(\Gamma\setminus \cup_{j=1}^{2\mathcal{D}}\Gamma_R(q_j)\right)\bigcup\left(\cup_{j=1}^{2\mathcal{D}}\partial B_R(q_j)\cap\Omega\right).
\end{equation*}
We set the orientation of $\tilde{\Gamma}$ to match that of $\Gamma$ where they coincide. With this orientation, the function $\tilde{g}:\tilde{\Gamma}\to\mathbb{S}^1$ defined by
\begin{align*}
\tilde{g}:=\left\{
\begin{array}{cl}
g & \mbox{on}\ \tilde{\Gamma}\cap C_j\ \mbox{for}\ j\ \mbox{odd}\\[0.5em]
-g & \mbox{on}\ \tilde{\Gamma}\cap C_j\ \mbox{for}\ j\ \mbox{even}\\[0.5em]
v_{\e}^{(j)} & \mbox{on}\ \partial B_R(q_j)\cap\Omega\ \mbox{for each}\ j=1,\ldots,2\mathcal{D},
\end{array}
\right.
\end{align*}
satisfies $\deg(\tilde{g};\tilde{\Gamma})=0$ by construction and therefore we may let $V$ be the $\mathbb{S}^1$-valued harmonic extension of $\tilde{g}$ to $\tilde{\Omega}$ which has uniformly bounded Dirichlet energy. Setting 
\begin{align*}
H_{\e}=\left\{
\begin{array}{ll}
V & \mbox{in}\ \tilde{\Omega},\\[0.5em]
v_{\e}^{(j)} & \mbox{in}\ \omega_{R}(q_j)\ \mbox{for each}\ j=1,\ldots,2\mathcal{D},
\end{array}
\right.
\end{align*}
we obtain a bound on $E_{\e}^{g,s}(u_{\e})$ via
\begin{equation*}
E_{\e}^{g,s}(u_{\e})\leq E_{\e}^{g,s}(H_{\e})=\sum_{j=1}^{2\mathcal{D}}E_{\e}^{g,s}(v_{\e}^{(j)};\omega_R(q_j))+E_{\e}^{g,s}(V;\tilde{\Omega})\leq \pi s\mathcal{D}|\ln\e|+C
\end{equation*}
as desired.
\end{proof}


\section{$\eta$-Compactness and Related Consequences}
In this section, we prove an $\eta$-compactness result which allows one to relate an energy bound to the non-existence of vortices. The idea here is that for two concentric balls, if the energy on the larger ball is small enough, then it is impossible for a vortex to exist in the smaller ball. This fact is pivotal in proving that the set of points $x\in\overline{\Omega}$ for which $|u_{\e}|$ is small can be covered by a finite set of $\e$-balls whose number is bounded independent of $\e$. We begin by stating a Pohosaev-type identity for solutions of \eqref{eq:ELstrong} or \eqref{eq:ELweak} which is obtained via integrating by parts against a smooth function. This identity will be needed to obtain an $\eta$-compactness result which will be developed in the next section.\\\\
Define
\begin{equation*}
e_{\e}(u):=\frac{1}{2}|\nabla u|^2+\frac{1}{4\e^2}\left(1-|u|^2\right)^2.
\end{equation*}
\begin{prop}
Let $\psi\in C^2(\Omega;\mathbb{R}^2)$. If $u$ is a solution of \eqref{eq:ELweak} or \eqref{eq:ELstrong}, then
\begin{equation}\label{Poh}
\int_{\partial\omega_r}\left\{e_{\e}(u)\langle\psi,n\rangle-\langle \partial_nu,\psi\cdot\nabla u\rangle\right\}ds\\=\int_{\omega_r}\left\{e_{\e}(u)\Div \psi-\sum_{j,l}\psi_{x_j}^{l}\langle\partial_{x_j}u,\partial_{x_{l}}u\rangle\right\}dx.
\end{equation}
\end{prop}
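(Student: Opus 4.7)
The plan is to recognize \eqref{Poh} as a classical Pohozaev-type identity that depends only on the interior PDE $-\Delta u = \frac{1}{\e^2}(1-|u|^2)u$ and not on the boundary conditions; hence a single computation covers both \eqref{eq:ELstrong} and \eqref{eq:ELweak}. The cleanest route is the stress--energy tensor approach. Define, for $j,l \in \{1,2\}$,
\begin{equation*}
T_{jl} := \langle \partial_{x_j}u,\partial_{x_l}u\rangle - \delta_{jl}\, e_{\e}(u).
\end{equation*}
A direct computation of $\sum_j \partial_{x_j}T_{jl}$ produces the term $\Delta u \cdot \partial_{x_l}u$ from differentiating $\partial_{x_j}u \cdot \partial_{x_l}u$, while the remaining second-derivative contributions are symmetric in $j\leftrightarrow l$ and cancel exactly against the gradient part of $\partial_{x_l}e_{\e}(u)$. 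What survives is
\begin{equation*}
\sum_{j}\partial_{x_j}T_{jl} \;=\; \Delta u\cdot \partial_{x_l}u \;+\; \frac{1}{\e^2}(1-|u|^2)\,\langle u,\partial_{x_l}u\rangle,
\end{equation*}
which vanishes after substituting $\Delta u = -\frac{1}{\e^2}(1-|u|^2)u$.

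With $T$ divergence-free in $l$, contract with $\psi$: $\partial_{x_j}(T_{jl}\psi^l) = T_{jl}\,\partial_{x_j}\psi^l$. Integrating over $\omega_r$ and applying the divergence theorem yields
\begin{equation*}
\int_{\partial\omega_r}\sum_{j,l}T_{jl}\,\psi^l n_j\,ds \;=\; \int_{\omega_r}\sum_{j,l}T_{jl}\,\partial_{x_j}\psi^l\,dx.
\end{equation*}
The boundary integrand simplifies using $\sum_j n_j\partial_{x_j}u = \partial_n u$ to $\langle \partial_n u,\psi\cdot\nabla u\rangle - e_{\e}(u)\langle \psi,n\rangle$, while the bulk integrand unpacks to $\sum_{j,l}\psi^l_{x_j}\langle \partial_{x_j}u,\partial_{x_l}u\rangle - e_{\e}(u)\Div\psi$. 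Multiplying through by $-1$ gives exactly \eqref{Poh}.

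There is no genuine obstacle here; the only subtlety worth flagging is that $\omega_r = B_r(x_0)\cap\Omega$ need not be smooth when $x_0\in\Gamma$, since $\partial B_r(x_0)$ meets $\Gamma$ transversally and $\partial\omega_r$ carries corners. However, $\omega_r$ is Lipschitz with piecewise-$C^{3,\alpha}$ boundary, $\psi\in C^2$, and elliptic regularity gives $u\in C^2(\overline{\Omega})$ from Lemma~\ref{lem:bounded} together with standard Schauder estimates, so the divergence theorem applies without issue and the identity holds as stated. The argument never invokes the tangential boundary conditions, which is what allows a single proof to serve both Euler--Lagrange systems.
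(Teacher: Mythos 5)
Your proof is correct and is essentially the computation the paper has in mind: the paper offers no written proof beyond the remark that the identity is ``obtained via integrating by parts against a smooth function,'' and your stress--energy tensor formulation is precisely the standard bookkeeping for that integration by parts (pairing the PDE with $\psi\cdot\nabla u$), using only the interior equation so that both boundary-value problems are covered at once. Your closing remark on the Lipschitz corner of $\omega_r$ when $x_0\in\Gamma$ and the regularity of $u$ up to the boundary is a point the paper leaves implicit, and it is handled correctly.
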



\begin{thm}[$\eta$-Compactness]\label{thm:eta}
[Strong Tangental Case] Let $\frac{3}{4}\leq \beta<\gamma<1$. There exists constants $\eta$, $\tilde{C}$, $\e_0>0$ such that for any solution $u_{\e}$ of \eqref{eq:ELstrong} with $\e\in(0,\e_0)$, if $x_0\in\overline{\Omega}$ and $E_{\e}(u_{\e};\omega_{2\e^{\beta}}(x_0))\leq\eta |\ln\e|$, then
\begin{align}
&|u_{\e}|\geq \frac{1}{2}\quad \mbox{in}\ \omega_{\e^{\gamma}}(x_0)\label{ineq:half2},\\[0.5em]
&\frac{1}{4\e^2}\int_{\omega_{\e^{\gamma}}(x_0)}(1-|u_{\e}|^2)^2\,dx\leq \tilde{C}\eta\label{ineq:ceta2}.
\end{align}
[Weak Tangential Case] Let $\frac{3}{4}s\leq\beta<\gamma<s\leq 1$. There exists constants $\eta$, $\tilde{C}$, $\e_0>0$ such that for any solution $u_{\e}$ of \eqref{eq:ELweak} with $\e\in(0,\e_0)$, if $x_0\in\overline{\Omega}$ and $E_{\e}^{g,s}(u_{\e};\omega_{2\e^{\beta}}(x_0))\leq\eta |\ln\e|$, then
\begin{align}
&|u_{\e}|\geq \frac{1}{2}\quad \mbox{in}\ \omega_{\e^{\gamma}}(x_0)\label{ineq:half},\\[0.5em]
&|\langle u_{\e},g^{\perp}\rangle|\leq \frac{1}{4}\quad \mbox{on}\ \Gamma\cap\overline{\omega_{\e^{\gamma}}(x_0)}\label{ineq:quarter},\\[0.5em]
&\frac{1}{4\e^2}\int_{\omega_{\e^{\gamma}}(x_0)}(1-|u_{\e}|^2)^2\,dx+\frac{1}{2\e^s}\int_{\Gamma\cap\overline{\omega_{\e^{\gamma}}(x_0)}}\langle u_{\e},g^{\perp}\rangle^2\,ds\leq \tilde{C}\eta\label{ineq:ceta}.
\end{align}
\end{thm}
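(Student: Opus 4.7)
The plan is to combine the Pohozaev identity~\eqref{Poh} with a logarithmic mean-value argument to produce an $\e$-independent bound on the bulk potential (and, in the weak case, on the boundary penalty) at a well-chosen intermediate radius, and then to upgrade this integral estimate to the pointwise statements via the gradient bound $|\nabla u_\e|\le C_0/\e$ from Lemma~\ref{lem:bounded}. The first step is to extract a radius $r^\ast\in(\e^\gamma,2\e^\beta)$ from the annular energy: polar decomposition on $A_{\e^\gamma,2\e^\beta}(x_0)$ together with the hypothesis gives
\begin{equation*}
\int_{\e^\gamma}^{2\e^\beta}\!\!\left(\int_{\partial B_r(x_0)\cap\Omega} e_\e(u_\e)\,ds\right)dr \;\le\; E_\e(u_\e;\omega_{2\e^\beta}(x_0)) \;\le\; \eta|\ln\e|,
\end{equation*}
and since the $\log r$-length of $(\e^\gamma,2\e^\beta)$ is of order $(\gamma-\beta)|\ln\e|$, the mean-value theorem applied to the logarithmic average yields an $r^\ast$ with $r^\ast\int_{\partial B_{r^\ast}\cap\Omega}e_\e(u_\e)\,ds \le \eta/(\gamma-\beta)+o(1)$, \emph{uniform in} $\e$. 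In the weak case the same averaging applied to the anchoring density $\e^{-s}\langle u_\e,g^\perp\rangle^2$ produces a parallel bound on its slice at $r^\ast$.

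Next I would apply~\eqref{Poh} on $\omega_{r^\ast}(x_0)$ with $\psi(x)=x-x_0$. Since $\Div\psi = 2$ and $\psi_{x_j}^{l}=\delta_{jl}$, the right-hand side of~\eqref{Poh} collapses to $\int_{\omega_{r^\ast}}\frac{1}{2\e^2}(1-|u_\e|^2)^2\,dx$, which is exactly the quantity we wish to control. On $\partial B_{r^\ast}\cap\Omega$, $\psi\cdot n = r^\ast$ and the boundary integrand is dominated by $r^\ast e_\e(u_\e)$, hence is uniformly under control by step~1. For interior $x_0$, this closes the estimate. For $x_0\in\Gamma$ there is an extra integrand on $\Gamma_{r^\ast}$: the $e_\e(u_\e)\langle\psi,n\rangle$ piece is absorbed since $|\langle x-x_0,n(x)\rangle|\le c|x-x_0|^2$ by the $C^{3,\alpha}$ smoothness of $\Gamma$, giving contribution $O(r^\ast)\cdot E_\e(u_\e;\omega_{2\e^\beta})=o(1)$. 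The term $\langle\partial_n u,\psi\cdot\nabla u\rangle$ is handled via the boundary conditions: in the strong case, $u_\perp=0$ and $\partial_n u_\parallel=0$ on $\Gamma$ reduce it to a curvature correction of order $o(1)$; in the weak case, $\partial_n u_\e = -\e^{-s}u_\perp g^\perp$ converts the integrand into $-\e^{-s}u_\perp\langle g^\perp,\psi\cdot\nabla u_\e\rangle$, which after a tangential integration by parts on $\Gamma$ is identified (up to geometric corrections) with the boundary-penalty density and absorbed onto the left-hand side. This produces~\eqref{ineq:ceta2} in the strong case and~\eqref{ineq:ceta} in the weak case.

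The pointwise estimates then follow by rescaling-and-contradiction. If $|u_\e(y)|<1/2$ at some $y\in\omega_{\e^\gamma}(x_0)$, the gradient bound propagates $|u_\e|<3/4$ throughout $B_{\e/(8C_0)}(y)\cap\Omega$; since this region has area $\gtrsim\e^2$ (at least a half-disc, even at boundary points), $\frac{1}{4\e^2}\int_{\omega_{\e^\gamma}}(1-|u_\e|^2)^2\,dx$ exceeds a universal positive constant, contradicting the $\tilde C\eta$ bound for $\eta$ sufficiently small---yielding~\eqref{ineq:half2} and~\eqref{ineq:half}. For~\eqref{ineq:quarter}, if $|\langle u_\e, g^\perp\rangle(y)|>1/4$ at some $y\in\Gamma\cap\overline{\omega_{\e^\gamma}(x_0)}$, the Neumann condition in~\eqref{eq:ELweak} together with $|\nabla u_\e|\le C_0/\e$ forces $|\partial_n u_\perp|\sim\e^{-s}$ in a boundary layer of width $\sim\e^s$, producing a lower bound on the Dirichlet energy over that layer which, once integrated tangentially along an arc on which $|u_\perp|>1/8$, contradicts~\eqref{ineq:ceta}. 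The principal obstacle is the Pohozaev analysis at boundary points: the $\Gamma$-contributions in~\eqref{Poh} must be consolidated so that the geometric error from $\langle x-x_0,n\rangle$ and the weak-anchoring Neumann flux (scaling like $\e^{-s}$) combine cleanly into controllable terms---this requires invoking the boundary conditions twice, once to cancel the leading normal-flux and once to identify the residue as the penalty density.
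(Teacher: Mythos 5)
Your overall architecture --- a mean-value selection of a good radius $r^\ast$, a Pohozaev identity on $\omega_{r^\ast}(x_0)$, and then pointwise conclusions by contradiction using the gradient bound of Lemma~\ref{lem:bounded} --- is exactly the paper's strategy (the Pohozaev step is packaged there as Lemma~\ref{lem:estimate}, yielding \eqref{intbound2} and \eqref{intbound}). Two remarks on that part. First, the paper uses a vector field $X$ with $\langle X,n\rangle=0$ on $\Gamma$ rather than $\psi=x-x_0$, precisely so that the term $\int_{\Gamma_{r^\ast}}e_{\e}(u)\langle\psi,n\rangle\,ds$ never appears; your version keeps it, and your claim that it is ``$O(r^\ast)\cdot E_{\e}(u_{\e};\omega_{2\e^{\beta}})$'' is not correct as stated, since a boundary integral of $e_{\e}(u)$ is not controlled by the area integral. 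It can be salvaged via the pointwise bounds $|u|\le 1$, $|\nabla u|\le C_0/\e$, which give $O((r^\ast)^3/\e^2)=O(\e^{3\beta-2})=o(1)$ --- this is where the hypothesis $\beta\ge 3/4$ enters, and it deserves to be said. Similarly, in the strong case the surviving boundary term is not merely a ``curvature correction'': it contains $u_{\parallel}\,\partial_n u_{\perp}\langle g^{\perp},\partial_{\tau}g\rangle$ with $\partial_n u_{\perp}$ controlled only by $C_0/\e$, so one gets $O((r^\ast)^2/\e)=O(\e^{2\beta-1})$, again small only because $\beta>1/2$.

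The genuine gap is in your proof of \eqref{ineq:quarter}. You argue that $|u_{\perp}(y)|>1/4$ together with the Neumann condition in \eqref{eq:ELweak} ``forces $|\partial_n u_{\perp}|\sim\e^{-s}$ in a boundary layer of width $\sim\e^s$,'' and then extract a Dirichlet-energy contradiction. The Neumann condition gives $|\partial_n u|=\e^{-s}|u_{\perp}|$ only \emph{on} $\Gamma$; to propagate this into the interior you need second-derivative bounds, which scale like $\e^{-2}$, so the lower bound $|\nabla u|\gtrsim\e^{-s}$ persists only over a layer of width $\sim\e^{2-s}\ll\e^s$. The resulting energy is $\sim\e^{-2s}\cdot\e^{2(2-s)}=\e^{4-4s}\to 0$ for $s<1$: no contradiction. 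The missing ingredient is a quantitative modulus of continuity for $u_{\e}$ \emph{along} $\Gamma$ at the scale $\e^s$; the tangential gradient bound $|\partial_{\tau}u|\le C_0/\e$ only controls oscillation at scale $\e$, and the trace of an $H^1$ function is only $H^{1/2}$, which does not embed in $C^0$. The paper obtains this by a second application of \eqref{Poh} with a star-shape vector field, producing the boundary H\"older estimate \eqref{ineq:difference}, $|u_{\e}(x)-u_{\e}(y)|\le C\sqrt{|x-y|}\,\e^{-s/2}$; this shows $|u_{\perp}|>1/8$ on an arc of length $\ge c\e^s$, so that $\frac{1}{2\e^s}\int_{\Gamma}u_{\perp}^2\,ds$ exceeds a universal constant, contradicting \eqref{ineq:ceta}. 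Without an estimate of this type your argument for \eqref{ineq:quarter} does not close. (Your derivations of \eqref{ineq:ceta2}, \eqref{ineq:ceta}, \eqref{ineq:half2} and \eqref{ineq:half} are otherwise sound and match the paper's.)
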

The proof for Theorem \ref{thm:eta} is heavily dependent on a crucial estimate. For $x_0\in\overline{\Omega}$, define as in \cite{struwe1994asymptotic, moser2003} the functions
\begin{align*}
F(r)=F(r;x_0,u,\e):=r\int_{\partial B_r(x_0)\cap\Omega}e_{\e}(u)\,ds,\quad F_{\Gamma}(r):=F(r)+\frac{r}{2\e^s}\sum_{x\in\partial\Gamma_r(x_0)}\langle u,g^{\perp}\rangle^2
\end{align*}
where the second function above is defined when $x_0\in\Gamma$.
\begin{lem}\label{lem:estimate}
Let $x_0\in\overline{\Omega}$. There exists constants $C>0$ and $r_0>0$ such that for $\e\in(0,1)$ and $r\in(0,r_0)$ we have:
\begin{enumerate}
\item If $x_0\in\Omega$, $\overline{\omega_r(x_0)}\cap\Gamma=\varnothing$ and $u$ is a solution of either \eqref{eq:ELweak} or \eqref{eq:ELstrong}, then
\begin{equation*}
\frac{1}{4\e^2}\int_{\omega_r}(1-|u|^2)^2\,dx\leq r\int_{\omega_r}\frac{1}{2}|\nabla u|^2 dx+F(r).
\end{equation*}
\item If $x_0\in\Gamma$ and $u$ is a solution of \eqref{eq:ELstrong}, then
\begin{equation}\label{intbound2}
\frac{1}{4\e^2}\int_{\omega_r}(1-|u|^2)^2\,dx\leq C\left[r\int_{\omega_r}\frac{1}{2}|\nabla u|^2\,dx+F(r)+\frac{r^2}{\e}\right].
\end{equation}
\item If $x_0\in\Gamma$ and $u$ is a solution of \eqref{eq:ELweak}, then
\begin{equation}\label{intbound}
\frac{1}{4\e^2}\int_{\omega_r}(1-|u|^2)^2\,dx+\frac{1}{2\e^s}\int_{\Gamma_r}\langle u,g^{\perp}\rangle^2\,ds\leq C\left[r\int_{\omega_r}\frac{1}{2}|\nabla u|^2\,dx+F_{\Gamma}(r)+\frac{r^2}{\e^s}\right].
\end{equation}
\end{enumerate}
\end{lem}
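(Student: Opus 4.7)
The three estimates all arise from the Pohozaev-type identity~\eqref{Poh} applied with a well-chosen test field $\psi$. The guiding principle is that $\psi$ should (i) agree with the dilation $x - x_0$ to first order, so that $\Div\psi = 2$ and $\psi^l_{x_j} = \delta_{jl}$ force the bulk right-hand side of~\eqref{Poh} to reduce to $\int_{\omega_r}\frac{1}{2\e^2}(1-|u|^2)^2\,dx$ modulo a controllable error, and (ii) be tangent to $\Gamma$ on $\Gamma_r$ whenever $x_0 \in \Gamma$, so that the $e_\e(u)\langle\psi,n\rangle$ contribution on the $\Gamma$-piece of $\partial\omega_r$ vanishes.

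For case (1), I would take $\psi(x) = x - x_0$ exactly. Since $\partial\omega_r = \partial B_r(x_0)$, with $\langle\psi,n\rangle = r$ and $\psi\cdot\nabla u = r\,\partial_n u$, the identity~\eqref{Poh} rearranges directly to $\tfrac{1}{2\e^2}\int_{\omega_r}(1-|u|^2)^2\,dx = F(r) - r\int_{\partial B_r(x_0)}|\partial_n u|^2\,ds$, and dropping the nonpositive second term finishes this case (in fact without the $r\int\tfrac{1}{2}|\nabla u|^2$ term, which is kept in the statement only for uniformity with cases (2)--(3)). For the boundary cases, with $x_0 \in \Gamma$, I would take $\psi$ to be the pushforward of the dilation field under a local $C^2$ diffeomorphism that flattens $\Gamma$ near $x_0$ (equivalently, subtract from $x - x_0$ its $\Gamma$-normal component, smoothly cut off). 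By construction $\langle\psi,n\rangle = 0$ on $\Gamma_R(x_0)$, $|\psi|\le C|x-x_0|$, $|\Div\psi - 2| + \sum_{j,l}|\psi^l_{x_j} - \delta_{jl}| \le C|x-x_0|$, and the tangential component $\psi\cdot\tau$ satisfies $|\psi\cdot\tau|\le Cr$ with $\partial_\tau(\psi\cdot\tau)\ge\tfrac{1}{2}$ on $\omega_r(x_0)$ for $r < r_0$ small. Plugging into~\eqref{Poh} reproduces $\int_{\omega_r}\tfrac{1}{2\e^2}(1-|u|^2)^2\,dx$ with error $O(r)\int_{\omega_r}e_\e(u)\,dx$, whose potential-energy part absorbs into the LHS once $r$ is small, leaving at most $Cr\int_{\omega_r}\tfrac{1}{2}|\nabla u|^2\,dx$. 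The $\partial B_r(x_0) \cap \Omega$ piece of the boundary integral is $O(F(r))$ (resp.\ $O(F_\Gamma(r))$) thanks to $|\psi|\le Cr$, leaving $\int_{\Gamma_r}\langle\partial_n u, \psi\cdot\nabla u\rangle\,ds$ as the remaining and most delicate term.

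For case (2), the strong anchoring gives $u_\perp = \partial_\tau u_\perp = \partial_n u_\parallel = 0$ on $\Gamma_r$. Expanding $\partial_n u$ and $\partial_\tau u$ in the $\{g,g^\perp\}$ frame and using the orthogonalities $\langle\partial_n g, g\rangle = \langle g^\perp, g\rangle = 0$, the pointwise inner product $\langle\partial_n u,\partial_\tau u\rangle$ on $\Gamma_r$ collapses to $u_\parallel^2\langle\partial_n g,\partial_\tau g\rangle + u_\parallel(\partial_n u_\perp)\langle g^\perp,\partial_\tau g\rangle$. The first summand is $O(1)$ by smoothness of $g$, while the second is the obstruction: I would control it pointwise via the uniform gradient estimate $|\partial_n u_\perp| \le C_0/\e$ from Lemma~\ref{lem:bounded}. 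Multiplying by $|\psi\cdot\tau|\le Cr$ and integrating over $|\Gamma_r|\le Cr$ then produces exactly the $Cr^2/\e$ error appearing in~\eqref{intbound2}.

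For case (3), substituting the weak Neumann condition $\partial_n u = -\e^{-s}u_\perp g^\perp$ converts the integrand into $-\e^{-s}u_\perp(\psi\cdot\tau)\langle g^\perp, \partial_\tau u\rangle$, and writing $\partial_\tau g = \dot\phi\, g^\perp$ with $\dot\phi$ smooth yields $\langle g^\perp, \partial_\tau u\rangle = \partial_\tau u_\perp + u_\parallel\dot\phi$. The decisive step is integration by parts on the $u_\perp\partial_\tau u_\perp = \tfrac{1}{2}\partial_\tau(u_\perp^2)$ piece, which produces three contributions: endpoint terms at $\partial\Gamma_r(x_0)$ absorbed exactly by $F_\Gamma(r)$ via $|\psi\cdot\tau|\le Cr$; a bulk term $-\tfrac{1}{2\e^s}\int_{\Gamma_r}\partial_\tau(\psi\cdot\tau)\,u_\perp^2\,ds$ which, since $\partial_\tau(\psi\cdot\tau)\ge\tfrac{1}{2}$, crosses the inequality with good sign and delivers the boundary-energy term $\tfrac{1}{2\e^s}\int_{\Gamma_r}u_\perp^2\,ds$ on the LHS of~\eqref{intbound} up to a constant absorbed in $C$; and a cross term $\e^{-s}\int_{\Gamma_r}u_\perp(\psi\cdot\tau)u_\parallel\dot\phi\,ds$ bounded by $Cr^2/\e^s$ via $|u|\le 1$ and $|\Gamma_r|\le Cr$. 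Collecting everything proves~\eqref{intbound}. The main obstacle throughout is the $\Gamma_r$ boundary integral: in case (2) it is the lack of direct control on $\partial_n u_\perp$ (resolved by the Lipschitz estimate of Lemma~\ref{lem:bounded}), while in case (3) it is the a priori bad $\e^{-s}$ factor (resolved by generating the boundary-energy term itself via integration by parts).
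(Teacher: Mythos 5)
Your proposal is correct and takes essentially the same route as the paper: the Pohozaev identity \eqref{Poh} applied with a $C^2$ vector field that is tangent to $\Gamma$ and agrees with $x-x_0$ to first order, together with the $\{g,g^\perp\}$ frame decomposition and the bound $\e|\nabla u|\leq C_0$ of Lemma \ref{lem:bounded} to produce the $Cr^2/\e$ error in case (2). For cases (1) and (3) the paper simply cites \cite{struwe1994asymptotic} and \cite{moser2003,alama2015weak}, and your reconstructions (the exact dilation field for (1); the integration by parts of $(\psi\cdot\tau)\,\partial_\tau(u_\perp^2)$ along $\Gamma_r$, whose endpoint terms are absorbed by $F_\Gamma$ and whose bulk term supplies the penalization integral with a favorable sign, for (3)) coincide with those cited arguments.
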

\begin{proof}
The proof for case (1) is shown in \cite[Lemma 2]{struwe1994asymptotic}. Inequality \eqref{intbound} follows from \cite[Lemma 5.4]{moser2003} or by changing every instance of $|u-g|^2$ with $\langle u,g^{\perp}\rangle^2$ throughout \cite[Lemma 4.2]{alama2015weak}. Thus, it only remains to prove \eqref{intbound2}. Let $r_0>0$ be chosen small enough so that $\Gamma\cap B_r(x_0)$ consists of a single smooth arc satisfying $|\Gamma_r|\leq Cr$ for all $0<r\leq r_0$. As in \cite{alama2015weak} we let $\mathcal{N}$ be a $2r_0$-neighbourhood of $\Gamma$, and by taking $r_0$ smaller if necessary, there exists a vector field $X\in C^2(\mathcal{N};\mathbb{R}^2)$ satisfying
\begin{align}
&\langle X,n\rangle=X_n=0\quad \mbox{for all}\ x\in\Gamma_r, \label{Xnormal}\\[0.5em]
&|X-(x-x_0)|\leq C|x-x_0|^2\quad \mbox{for all}\ x\in\omega_r, \label{Xbound}\\[0.5em]
& |\partial_{x_i}X^j-\delta_{ij}|\leq C|x-x_0|\quad \mbox{for all}\ x\in\omega_r, \label{DXbound}
\end{align} 
for a constant $C>0$ and for any $x_0\in\Gamma$. To obtain inequality \eqref{intbound2} we consider the Pohosaev-type identity \eqref{Poh} with $\psi=X$ and  estimate. Using the boundary decomposition $\partial\omega_r=\Gamma_r\cup(\partial B_r(x_0)\cap\Omega)$, it will be convenient to perform these estimates on $\Gamma_r$ and $\partial B_r(x_0)\cap\Omega$ separately.\\\\
\underline{Estimates Along $\Gamma_r$}:\\[0.5em]
By \eqref{Xnormal} we may write $X=\langle X,\tau\rangle\tau=X_{\tau}\tau$ where $\tau$ is the unit tangent vector to $\Gamma_r$ and so $X\cdot\nabla u=X_{\tau}\partial_{\tau}u$ on $\Gamma_r$. With this, the lefthand side of \eqref{Poh} becomes
\begin{equation*}
\int_{\Gamma_r}\left\{e_{\e}(u)X_n-\langle \partial_nu,X\cdot\nabla u\rangle\right\}ds=-\int_{\Gamma_r}\langle \partial_nu ,X_{\tau}\partial_{\tau}u\rangle\,ds.
\end{equation*}
Using the derivative representations 
\begin{align*}
\partial_nu&=\partial_n(u_{\parallel}g+u_{\perp}g^{\perp})=u_{\parallel}\partial_ng+\partial_nu_{\parallel}g+u_{\perp}\partial_ng^{\perp}+\partial_nu_{\perp}g^{\perp},\\[0.5em]
\partial_{\tau}u&=\partial_{\tau}(u_{\parallel}g+u_{\perp}g^{\perp})=u_{\parallel}\partial_{\tau}g+\partial_{\tau}u_{\parallel}g+u_{\perp}\partial_{\tau}g^{\perp}+\partial_{\tau}u_{\perp}g^{\perp},
\end{align*}
with the known conditions $u_{\perp}=\partial_nu_{\parallel}=\partial_{\tau}u_{\perp}=0$, we obtain
\begin{align*}
\langle \partial_nu ,X_{\tau}\partial_{\tau}u\rangle&=X_{\tau}\langle u_{\parallel}\partial_ng+\partial_nu_{\perp}g^{\perp},u_{\parallel}\partial_{\tau}g+\partial_{\tau}u_{\parallel}g\rangle\\
&=X_{\tau}((u_{\parallel})^2\langle\partial_ng,\partial_{\tau}g\rangle+u_{\parallel}\partial_nu_{\perp}\langle g^{\perp},\partial_{\tau}g\rangle).
\end{align*}
Applying Lemma \ref{lem:bounded} and Cauchy-Schwarz,
\begin{align*}
|(u_{\parallel})^2\langle\partial_ng,\partial_{\tau}g\rangle|&\leq |\partial_n g||\partial_{\tau}g|\leq |\nabla g|^2\leq C=C(g),\\[0.5em]
|u_{\parallel}\partial_nu_{\perp}\langle g^{\perp},\partial_{\tau}g\rangle|&\leq |\partial_n u_{\perp}||g^{\perp}||\partial_{\tau}g|\leq |\nabla u||\nabla g|\leq CC_0\e^{-1}.
\end{align*}
Therefore, there is a constant $c$ for which
\begin{equation*}
|\langle \partial_nu ,X_{\tau}\partial_{\tau}u\rangle|\leq |X_{\tau}|\frac{c}{\e}.
\end{equation*}
Moreover since $|X_{\tau}|\leq Cr$ and $|\Gamma_r|\leq Cr$ we have another constant $C$ (independent of $\e$) so that
\begin{equation*}
\left|\int_{\Gamma_r}\langle\partial_nu,X\cdot\nabla u\rangle\,ds\right|\leq \int_{\Gamma_r}|X_{\tau}|\frac{c}{\e}\,ds\leq \frac{Cr^2}{\e}.
\end{equation*}
\underline{Estimates Along $\partial B_r(x_0)\cap\Omega$}:\\[0.5em]
The lefthand side of (\ref{Poh}) along $\partial B_r(x_0)\cap\Omega$ can be written as the sum of integrals $I_1+I_2$ which we estimate separately. First, by \eqref{Xbound}, $|X_n|$, $|X_{\tau}|\leq Cr$ and by applying Cauchy-Schwarz we get
\begin{align*}
I_1&=\int_{\partial B_r(x_0)\cap\Omega}\left\{\frac{1}{2}|\nabla u|^2X_n-X_n|\partial_nu|^2-X_{\tau}\langle\partial_nu,\partial_{\tau}u\rangle\right\}ds\\[0.5em]
&\leq  Cr\int_{\partial B_r(x_0)\cap\Omega}\left\{\frac{1}{2}|\partial_{\tau}u|^2+\frac{1}{2}|\partial_nu|^2+\frac{1}{2}|\partial_nu|^2+\frac{1}{2}|\partial_{\tau}u|^2\right\}ds\\[0.5em]
&=Cr\int_{\partial B_r(x_0)\cap\Omega}|\nabla u|^2\,ds.
\end{align*}
An easy estimate for $I_2$ is given by
\begin{equation*}
I_2=\frac{1}{4\e^2}\int_{\partial B_r(x_0)\cap\Omega}(1-|u|^2)^2X_n\,ds\leq \frac{Cr}{4\e^2}\int_{\partial B_r(x_0)\cap\Omega}(1-|u|^2)^2\,ds.
\end{equation*}
Thus, for $C>0$ large enough
\begin{align*}
I_1+I_2\leq Cr\int_{\partial B_r(x_0)\cap\Omega}\frac{1}{2}\left\{|\nabla u|^2+\frac{1}{2\e^2}(1-|u|^2)^2\right\}ds=CF(r)
\end{align*}
and so the lefthand side of \eqref{Poh} has the estimate
\begin{align*}
\int_{\partial\omega_r}\left\{e_{\e}(u)X_n-\langle\partial_nu,X\cdot\nabla u\rangle\right\}ds&=I_1+I_2-\int_{\Gamma_r}\langle\partial_nu,X\cdot\nabla u\rangle\,ds\\[0.5em]
&\leq C\left[F(r)+\frac{r^2}{\e}\right].
\end{align*}
\underline{Estimates on $\omega_r$}:\\[0.5em]
The righthand side of \eqref{Poh} can be written as the sum of integrals $J_1+J_2$ which again we estimate separately. By \eqref{DXbound} and Cauchy-Schwarz,
\begin{equation*}
\sum_{j,l}X_{x_j}^{l}\langle\partial_{x_j}u,\partial_{x_{l}}u\rangle\leq|\nabla u|^2+2 Cr|\nabla u|^2
\end{equation*}
and since $\Div X\geq 2 - 2Cr$ we have 
\begin{align*}
J_1&=\int_{\omega_r}\left\{\frac{1}{2}|\nabla u|^2\Div X-\sum_{j,l}X_{x_j}^{l}\langle\partial_{x_j}u,\partial_{x_{l}}u\rangle\right\}dx\\
&\geq \int_{\omega_r}\left\{\frac{1}{2}|\nabla u|^2\Div X-|\nabla u|^2-2 Cr|\nabla u|^2\right\}dx\\
&\geq-Cr\int_{\omega_r}|\nabla u|^2\,dx.
\end{align*} 
For the integral $J_2,$ one can choose $r_0$ smaller if necessary so that $\Div X\geq 2-2Cr\geq 1$ and
\begin{align*}
J_2=\frac{1}{4\e^2}\int_{\omega_r}(1-|u|^2)^2\Div X\,dx\geq \frac{1}{4\e^2}\int_{\omega_r}(1-|u|^2)^2\,dx.
\end{align*}
Putting these estimates together,
\begin{equation*}
\frac{1}{4\e^2}\int_{\omega_r}(1-|u|^2)^2\,dx-Cr\int_{\omega_r}\frac{1}{2}|\nabla u|^2\,dx\leq J_1+J_2=I_1+I_2\leq C\left[F(r)+\frac{r^2}{\e}\right]
\end{equation*}
which completes the proof for inequality \eqref{intbound2}.
\end{proof}
We now prove Theorem \ref{thm:eta}.
\begin{proof}[Proof of Theorem \ref{thm:eta}.]
The case where $x_0\in\Omega$ and $\omega_{2\e^{\beta}}(x_0)\cap\Gamma=\varnothing$ is shown in \cite[Lemma 2.3]{struwe1994asymptotic}. Therefore, it is sufficient to prove the result for $x_0\in\Gamma$. We begin by proving \eqref{ineq:ceta2} for strong tangential solutions. Inequality \eqref{ineq:ceta} for weak tangential solutions is done similarly. Using the mean value theorem for integrals, there exists $r_{\e}\in(2\e^{\gamma},2\e^{\beta})$ such that $F(r_{\e})\leq \eta(\gamma-\beta)^{-1}$. Using the radius $r=r_{\e}$ in \eqref{intbound2} gives
\begin{align*}
\frac{1}{4\e^2}\int_{\omega_{r_{\e}}(x_0)}(1-|u_{\e}|^2)^2\,dx&\leq C\left[r_{\e}\int_{\omega_{r_{\e}}(x_0)}\frac{1}{2}|\nabla u|^2\,dx+F(r_{\e})+\frac{r_{\e}^2}{\e}\right]\\[0.5em]
&\leq C\left[2\e^{3/4}\eta|\ln\e|+\frac{\eta}{\gamma-\beta}+4\sqrt{\e}\right].
\end{align*}
Then for $\e<\e_0$ with appropriately chosen $\e_0>0$, we get \eqref{ineq:ceta2}. Inequalities \eqref{ineq:half2} and \eqref{ineq:half} can be obtained using a contradiction argument. By assuming there is some $x_1\in \omega_{\e^{\gamma}}(x_0)$ such that $|u(x_1)|<1/2$, standard methods involving the mean value theorem and smoothness properties of $\Gamma$ \cite{bbh} allow us to conclude that there is a radius $r=c\e$ and a constant $c'>0$ independent of $\eta$ and $\e$ such that
\begin{equation*}
\tilde{C}\eta\geq \frac{1}{4\e^2}\int_{\omega_{2\e^{\gamma}}(x_0)}(1-|u|^2)^2 dx\geq \frac{1}{4\e^2}\int_{\omega_{c\e}(x_1)}(1-|u|^2)^2 dx\geq c'.
\end{equation*}
Taking $\eta$ smaller if necessary gives the contradiction.
For inequality \eqref{ineq:quarter}, suppose $r_0>0$ is taken small enough so that $\omega_r(x_0)$ is strictly starshaped with respect to some point $x_2\in \omega_r(x_0)$. By setting $\psi=x-x_2$ in \eqref{Poh} and following \cite[Proposition 4.1]{alama2015weak} or \cite[Proposition 4.1]{alama2020thin}, one can find the estimate
\begin{equation}\label{ineq:difference}
|u_{\e}(x)-u_{\e}(y)|\leq C\sqrt{|x-y|}\e^{-s/2}
\end{equation}
holding for all $x,y\in\Gamma_{r_{\e}}$ with $C$ a constant independent of $\e$ and $x_0$ . As in the interior case, suppose there is some point $x_3\in\Gamma_{r_{\e}}$ such that $|\langle u_{\e}(x_3),g^{\perp}(x_3)\rangle|>1/4$. Then by the triangle inequality, Cauchy-Schwarz and the uniform bound of Lemma \ref{lem:bounded}:
\begin{equation*}
|\langle u_{\e}(x),g^{\perp}(x)\rangle|>1/4-|u_{\e}(x)-u_{\e}(x_3)|-|g^{\perp}(x)-g^{\perp}(x_3)|.
\end{equation*}
By \eqref{ineq:difference} and the smoothness of $g$, a radius $\rho$ proportional to $\e^s$ can be chosen so that
\begin{equation*}
|u_{\e}(x)-u_{\e}(x_3)|,\ |g^{\perp}(x)-g^{\perp}(x_3)|<\frac{1}{16}
\end{equation*}
for all $x\in\Gamma_{r_{\e}}\cap B_{\rho}(x_3)$ and so $|\langle u_{\e}(x),g^{\perp}(x)\rangle|>1/8$ on this set.
Applying inequality \eqref{ineq:ceta} along with the estimate $|\Gamma_{r_{\e}}\cap B_{\rho}(x_3)|\geq c''\e^s$ with $c''$ independent of $\e$ and $\eta$, we have
\begin{equation*}
\tilde{C}\eta\geq \frac{1}{2\e^s}\int_{\Gamma_{r_{\e}}\cap B_{\rho}(x_3)}\langle u_{\e},g^{\perp}\rangle^2\,ds>\frac{c''}{128}.
\end{equation*}
As before, we choose $\eta$ small enough to obtain a contradiction.
\end{proof}
Define the family of bad sets
\begin{align*}
S_{\e}&:=\left\{x\in\overline{\Omega}:|u_{\e}(x)|<\frac{1}{2}\right\},\\
S_{\e}^{g,s}&:=S_{\e}\cup\left\{x\in\Gamma:|\langle u_{\e}(x),g^{\perp}(x)\rangle|>\frac{1}{4}\right\}.
\end{align*}
\begin{prop}\label{prop:uniformballbound}
[Strong Tangential Case] There exists $\tilde{N}\in\mathbb{N}$ depending only on $\Omega$, a constant $\lambda>1$ independent of $\e$ and points $p_{\e,1},\ldots,p_{\e,I_{\e}}\in S_{\e}\cap\Omega$, $q_{\e,1},\ldots,q_{\e,J_{\e}}\in S_{\e}\cap\Gamma$ such that
\begin{enumerate}[(i)]
\item $I_{\e}+J_{\e}\leq \tilde{N}$,
\item $S_{\e}\subset \bigcup_{i=1}^{I_{\e}}B_{\lambda\e}(p_{\e,i})\cup \bigcup_{j=1}^{J_{\e}}B_{\lambda\e}(q_{\e,j})$,
\item $\{B_{\lambda\e}(p_{\e,i}),B_{\lambda\e}(q_{\e,j})\}_{1\leq i\leq I_{\e},1\leq j\leq J_{\e}}$ are mutually disjoint with centers satisfying
\begin{equation*}
|p_{\e,i}-p_{\e,j}|,\ |q_{\e,i}-q_{\e,j}|,\ |p_{\e,i}-q_{\e,j}|> 8\lambda \e,
\end{equation*}
\item $\overline{B_{\lambda\e}(p_{\e,i})}\cap\Gamma=\varnothing$ for all $i=1,\ldots, I_{\e}$.
\end{enumerate}
[Weak Tangential Case] There exists $\tilde{N}\in\mathbb{N}$ depending only on $\Omega$, a constant $\lambda>1$ independent of $\e$ and points $p_{\e,1},\ldots,p_{\e,I_{\e}}\in S_{\e}^{g,s}\cap\Omega$, $q_{\e,1},\ldots,q_{\e,J_{\e}}\in S_{\e}^{g,s}\cap\Gamma$ such that
\begin{enumerate}[(i)]
\item $I_{\e}+J_{\e}\leq \tilde{N}$,
\item $S_{\e}^{g,s}\subset \bigcup_{i=1}^{I_{\e}}B_{\lambda\e}(p_{\e,i})\cup \bigcup_{j=1}^{J_{\e}}B_{\lambda\e^s}(q_{\e,j})$,
\item $\{B_{\lambda\e}(p_{\e,i}),B_{\lambda\e^s}(q_{\e,j})\}_{1\leq i\leq I_{\e},1\leq j\leq J_{\e}}$ are mutually disjoint with centers satisfying
\begin{equation*}
|p_{\e,i}-p_{\e,j}|> 8\lambda \e,\quad |q_{\e,i}-q_{\e,j}|>8\lambda \e^s,\quad and \quad |p_{\e,i}-q_{\e,j}|> 8\lambda \e^s,
\end{equation*}
\item $\overline{B_{\lambda\e}(p_{\e,i})}\cap\Gamma=\varnothing$ for all $i=1,\ldots, I_{\e}$.
\end{enumerate}
\end{prop}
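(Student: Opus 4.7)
\textbf{Proof Plan for Proposition~\ref{prop:uniformballbound}.}

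The strategy combines a Vitali-type maximal packing of the bad set $S_\e$ (resp.\ $S_\e^{g,s}$) at the natural $\e$-scale (resp.\ $\e$ in the interior, $\e^s$ at the boundary) with the energy concentration provided by the $\eta$-compactness Theorem~\ref{thm:eta} and the global upper bound of Proposition~\ref{prop:upperbound}.

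\emph{Covering.} In the strong case, fix $\lambda>1$ (to be enlarged below) and extract a maximal subset $\{z_\ell\}\subset S_\e$ with pairwise separation exceeding $8\lambda\e$; the balls $B_{4\lambda\e}(z_\ell)$ are then disjoint, while $\{B_{8\lambda\e}(z_\ell)\}$ cover $S_\e$ by maximality. Classify $z_\ell$ as interior when $\dist(z_\ell,\Gamma)>2\lambda\e$, and as boundary otherwise; in the latter case replace $z_\ell$ by its nearest-point projection $q_{\e,j}\in\Gamma$, which satisfies $B_{\lambda\e}(z_\ell)\subset B_{3\lambda\e}(q_{\e,j})$. After absorbing such factors into a new $\lambda$, properties (ii)--(iv) follow by construction.

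\emph{Bounding the count.} The central task is to show the number of centers is bounded independently of $\e$. Let $\beta\in[3/4,1)$ be as in Theorem~\ref{thm:eta}, and extract a maximal $4\e^\beta$-separated subcollection $\{y_m\}_{m=1}^M\subset\{z_\ell\}$; then the sets $\omega_{2\e^\beta}(y_m)$ are disjoint. Each $y_m\in S_\e$, so by the contrapositive of Theorem~\ref{thm:eta} we have $E_\e(u_\e;\omega_{2\e^\beta}(y_m))>\eta|\ln\e|$. Summing and invoking \eqref{ineq:upper2}:
$$M\eta|\ln\e|\le\sum_{m=1}^M E_\e(u_\e;\omega_{2\e^\beta}(y_m))\le E_\e(u_\e)\le\pi\mathcal{D}|\ln\e|+C,$$
so $M\le\pi\mathcal{D}/\eta+o(1)$ is uniform in $\e$. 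To bound further the number of $z_\ell$'s clustered inside a single $B_{4\e^\beta}(y_m)$, I would invoke a vortex ball-construction of Jerrard--Sandier type: starting from the disjoint family $\{B_{\lambda\e}(z_\ell)\}$ and growing the radii continuously up to scale $\e^\beta$, each connected cluster of merging balls supports energy of order $\pi(1-\beta)|\ln\e|$. Comparison with the global upper bound again caps the initial count, yielding $I_\e+J_\e\le\tilde N$ depending only on $\Omega$, $g$, and $\mathcal{D}$.

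\emph{Weak tangential case.} The argument proceeds identically with the following adjustments: interior centers $p_{\e,i}$ are chosen with separation $8\lambda\e$, whereas boundary centers $q_{\e,j}$ use the larger scale $8\lambda\e^s$, reflecting the penalty scale of $E_\e^{g,s}$ and the condition \eqref{ineq:quarter} which contributes points with $|\langle u_\e,g^\perp\rangle|>1/4$ to $S_\e^{g,s}$. Classification as boundary uses the criterion $\dist(z_\ell,\Gamma)>2\lambda\e^s$. The counting uses \eqref{ineq:upper} in place of \eqref{ineq:upper2}, with the boundary penalty naturally absorbed into the localized energies $E_\e^{g,s}(u_\e;\omega_{2\e^\beta}(y_m))$ via Theorem~\ref{thm:eta}.

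\emph{Main obstacle.} The delicate step is the refinement from the coarse $\e^\beta$-scale covering delivered directly by $\eta$-compactness to the natural $\e$-scale (and $\e^s$-scale at the boundary in the weak case). The Vitali plus energy argument alone only controls the number of $\e^\beta$-clusters, and closing the gap to $\e$-balls requires tracking energy accumulation across scales $[\e,\e^\beta]$ via the ball-construction. In the weak case, one must additionally verify that a cluster concentrated within $\e^s$ of $\Gamma$ is assigned to a boundary center rather than broken into multiple interior $\e$-balls, which is possible because $\e\ll\e^s$ as $\e\to 0$ so the two scales are well separated.
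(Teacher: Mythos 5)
Your first counting step (a maximal $4\e^{\beta}$-separated subset of $S_\e$, the contrapositive of Theorem~\ref{thm:eta}, and the global upper bound, giving at most $\pi\mathcal{D}/\eta+o(1)$ clusters at scale $\e^{\beta}$) is exactly the Struwe-type argument the paper relies on. But the step you yourself flag as the main obstacle --- passing from $O(1)$ clusters of radius $\e^{\beta}$ to $O(1)$ balls of radius $\lambda\e$ --- is resolved incorrectly. A Jerrard--Sandier ball growth from scale $\e$ to scale $\e^{\beta}$ produces a lower bound proportional to the \emph{degree} carried by each cluster, not to the \emph{number} of initial balls: a cluster of total degree $1$ (or a dipole of total degree $0$) costs essentially the same logarithmic energy whether it contains one zero of $u_\e$ or $|\ln\e|$ of them, so comparison with the upper bound caps the total degree but says nothing about $I_\e+J_\e$. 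The mechanism actually used (in \cite[Lemma 4.4]{alama2020thin}, following \cite[Section 3]{struwe1994asymptotic} and \cite[Theorem IV.1]{bbh}, which is all the paper's proof consists of) is entirely different: it counts bad points with the \emph{potential and penalty terms}, which your proposal never touches. Concretely, the gradient bound of Lemma~\ref{lem:bounded} shows that each $x_0\in S_\e$ forces $\frac{1}{4\e^2}\int_{\omega_{\mu\e}(x_0)}(1-|u_\e|^2)^2\,dx\geq c_1>0$ (and, via \eqref{ineq:difference}, each boundary point of $S_\e^{g,s}$ forces a fixed amount of $\frac{1}{2\e^s}\int_{\Gamma}\langle u_\e,g^{\perp}\rangle^2$ on an $\e^s$-arc); on the other hand, Lemma~\ref{lem:estimate} applied at a radius chosen by the mean value theorem in a dyadic annulus between $\e^{\beta}$ and a slightly smaller power of $\e$ (so that $F(r)=O(1)$, $r\int|\nabla u_\e|^2\to 0$ and $r^2/\e\to 0$) bounds the total potential plus penalty energy of each cluster by a constant independent of $\e$. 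Dividing the constant by the quantum $c_1$ bounds the number of disjoint $\mu\e$-balls (resp.\ $\mu\e^s$-arcs) in each cluster, and hence $I_\e+J_\e$. Without an $\e$-independent bound on the localized potential energy, your count of $\e$-balls per cluster is simply not controlled.

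A secondary, fixable issue: a single Vitali extraction does not deliver property (iii). If you take a maximal $8\lambda\e$-separated subset, the covering radius you obtain is $8\lambda\e$, so after renaming $\lambda':=8\lambda$ the centers are only $\lambda'\e$-separated rather than $8\lambda'\e$-separated; one cannot simultaneously shrink the covering radius and inflate the separation by relabeling. This is precisely what the iterative merging of \cite[Theorem IV.1]{bbh} is for: clusters of nearby balls are replaced by single larger balls, the process terminates in at most $\tilde N$ steps because each merge strictly decreases the count, and the final radius $\lambda\e$ depends only on $\tilde N$. The same merging also repairs the separation lost when you project near-boundary centers onto $\Gamma$, and in the weak case it is where the two scales $\e$ and $\e^s$ are reconciled (an interior ball too close to a boundary ball is absorbed into it).
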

The proof is exactly as in \cite[Lemma 4.4]{alama2020thin} which is based on the method of \cite[Section 3]{struwe1994asymptotic} and a ball merging method which is presented in \cite[Theorem IV.1] {bbh}. As a consequence of Balzano-Weierstrass, we also have a result which states that the bad sets $S_{\e}$, $S_{\e}^{g,s}$ can eventually be covered by a static ball covering (along a subsequence $\e_n\to 0$).
\begin{prop}\label{prop:sigmaballs}
\text{[Strong Tangential Case]} For any sequence of $\e\to 0$ there is a subsequence $\e_n\to 0$, a constant $\sigma_0>0$ and a finite collection of points $\{p_1,\ldots,p_I\}\subset\Omega$, $\{q_1,\ldots,q_J\}\subset\Gamma$ such that for any $0<\sigma<\sigma_0$ and for all $n\in\mathbb{N}$, the collection of sets
\begin{equation}\label{def:sigma}
\mathcal{S}_{\sigma}:=\{B_{\sigma}(p_i)\}_{i=1}^I\cup\{B_{\sigma}(q_j)\}_{j=1}^J
\end{equation} 
are mutually disjoint and cover $S_{\e_n}$.\\[0.5em]
[Weak Tangential Case] The same result holds for the bad set $S_{\e_n}^{g,s}$ but with $\mathcal{S}_{\sigma}$ replaced by
\begin{equation}\label{def:ssigma}
\mathcal{S}_{\sigma}^{g,s}:=\{B_{\sigma}(p_i)\}_{i=1}^I\cup\{B_{\sigma^s}(q_j)\}_{j=1}^J.
\end{equation} 
\end{prop}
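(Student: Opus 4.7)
The proof is a standard compactness argument built on Proposition \ref{prop:uniformballbound}. The plan is, for the given sequence $\e\to 0$, to first pass to a subsequence along which the number of bad balls is constant, then apply Bolzano--Weierstrass to extract limits of the centers, and finally regroup those limits so that the balls in $\mathcal{S}_\sigma$ (resp. $\mathcal{S}_\sigma^{g,s}$) are pairwise disjoint for all sufficiently small $\sigma$.

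Proposition \ref{prop:uniformballbound} provides at most $\tilde{N}$ ball centers $p_{\e,i}\in\Omega$ and $q_{\e,j}\in\Gamma$ with $I_\e+J_\e\leq\tilde{N}$. Since $(I_\e,J_\e)$ takes only finitely many integer values, pass to a subsequence on which $I_\e\equiv I^*$ and $J_\e\equiv J^*$ are constant. Since there are only finitely many centers at each level and $\overline{\Omega}$ is compact, applying Bolzano--Weierstrass successively to each of the $I^*+J^*$ sequences, extract a further subsequence $\e_n\to 0$ along which
\[
p_{\e_n,i}\longrightarrow p_i^*\in\overline{\Omega},\qquad q_{\e_n,j}\longrightarrow q_j\in\Gamma,
\]
for each $i=1,\ldots,I^*$ and $j=1,\ldots,J^*$.

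Reorganize the limits as follows: some of the interior limits $p_i^*$ may in fact lie on $\Gamma$. Move any such limit into the boundary list of the $q_j$'s, then discard duplicates, to obtain distinct interior limit points $\{p_1,\ldots,p_I\}\subset\Omega$ and distinct boundary limit points $\{q_1,\ldots,q_J\}\subset\Gamma$. Set
\[
\sigma_0:=\tfrac{1}{4}\min\bigl\{|x-y|:x,y\in\{p_1,\ldots,p_I\}\cup\{q_1,\ldots,q_J\},\ x\neq y\bigr\}>0.
\]
For any $\sigma\in(0,\sigma_0)$, the balls appearing in $\mathcal{S}_\sigma$ (resp. $\mathcal{S}_\sigma^{g,s}$) are pairwise disjoint by the choice of $\sigma_0$. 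The covering property holds eventually: for $n$ sufficiently large, $\lambda\e_n<\sigma$ and $\lambda\e_n^s<\sigma^s$, so by the triangle inequality and the convergence of centers, each original bad ball from Proposition \ref{prop:uniformballbound} is contained in a limit-centered ball of the appropriate size in $\mathcal{S}_\sigma$ (strong case) or $\mathcal{S}_\sigma^{g,s}$ (weak case), and hence the entire bad set is covered.

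The delicate point is the handling of interior centers $p_{\e_n,i}$ whose limits land on $\Gamma$. Although $\overline{B_{\lambda\e}(p_{\e,i})}\cap\Gamma=\varnothing$ for each fixed $\e>0$, the centers themselves may drift towards $\Gamma$ as $\e\to 0$. In the weak case such an interior ball $B_{\lambda\e_n}(p_{\e_n,i})$ must then be absorbed into a boundary ball $B_{\sigma^s}(q_j)$ of larger radius; this is consistent since $\lambda\e_n\ll\sigma^s$ for $n$ large. Once this relabelling is carried out, the inclusions $S_{\e_n}\subset\bigcup\mathcal{S}_\sigma$ and $S_{\e_n}^{g,s}\subset\bigcup\mathcal{S}_\sigma^{g,s}$ hold for all $n$ sufficiently large, completing the proof.
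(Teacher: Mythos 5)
Your proposal is correct and is essentially the argument the paper intends: the paper offers no written proof beyond invoking Bolzano--Weierstrass on the uniformly bounded collection of centers from Proposition \ref{prop:uniformballbound}, which is exactly what you carry out, including the necessary regrouping of interior centers whose limits land on $\Gamma$. The only cosmetic point is that you obtain the covering for $n$ sufficiently large rather than for all $n$; discarding the finitely many initial terms of the subsequence and relabelling resolves this.
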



\section{Local Orientation and Defect Windings}\label{sec:wind}
Now that the bad sets $S_{\e}$ and $S_{\e}^{g,s}$ have been shown to have finite bad ball coverings, we are in a position to analyze the winding behaviour of minimizers around defects. When dealing with interior bad balls, we may quantify the winding of $u_{\e}$ on $\partial B_{\lambda\e}(p_{i,\e})$ in the usual way since $|u_{\e}|\geq 1/2$ on this curve. In particular, we define the degree of $u_{\e}$ around $\partial B_{\lambda\e}(p_{i,\e})$ to be the degree of the normalization of $u_{\e}$ about $\partial B_{\lambda\e}(p_{i,\e})$:
\begin{equation*}
d_i=d_{i,\e}=\deg(u_{\e};\partial B_{\lambda\e}(p_{i,\e})):=\deg(u_{\e}/|u_{\e}|;\partial B_{\lambda\e}(p_{i,\e})).
\end{equation*}
Analyzing the winding of $u_{\e}$ about boundary bad balls is slightly more subtle, however. As mentioned in the introduction, we define the notion of a \emph{boundary index}, whose function is analogous to the degree of interior defects. Specifically, the boundary index aims to quantify the turning behaviour of $u_{\e}$ along circular arcs lying in the interior $\Omega$ that connect two nearby points on $\Gamma$. To begin constructing this quantity, we place focus on strong tangential solutions and then show the necessary modifications for weak tangential solutions.\\

Consider again the local polar coordinate system found in Section \ref{sec:prelim} as defined by the angular bounds \eqref{bounds:theta} with center point $q_{\e,j}\in\Gamma$. Let $B_{\lambda\e}(q_{\e,j})$ be some fixed boundary bad ball for a strong tangential solution $u_{\e}$ and fix $R>\lambda\e$ so that for all $\rho\in[\lambda\e, R]$ the closure of $\omega_{\rho}(q_{\e,j})$ does not intersect the closure of any other bad ball. Since $|u_{\e}|\geq 1/2$ outside $B_{\lambda\e}(q_{\e,j})$, there is a single-valued function $\psi$ with 
\begin{equation*}
\frac{u_{\e}}{|u_{\e}|}=e^{i\psi}\quad \mbox{on}\ \overline{A_{\lambda\e,R}(q_{\e,j})}.
\end{equation*}
Likewise, there is a lifting $\gamma$ of $g$ for which $g=e^{i\gamma}$ on $\overline{A_{\lambda\e,R}(q_{\e,j})}$ since $g:\mathcal{N}_{\Gamma}\to\mathbb{S}^1$. Looking along the curve $\Gamma^{+}_{\lambda\e,R}(q_{\e,j})$ and using the definition $\mathcal{H}_g(\Omega)$, it holds that either
\begin{equation}\label{bd:gplusstrong}
|\psi(\rho,\theta_1(\rho))-\gamma(\rho,\theta_1(\rho))|=0\quad \mbox{or}\quad |\psi(\rho,\theta_1(\rho))-\gamma(\rho,\theta_1(\rho))\pm\pi|=0.
\end{equation}
These two possible conditions comes down the observation that $u_{\e}$ has either a phase equal to that of $g$ or to $-g$. This fact induces a sense of orientation for $u_{\e}$ with respect to $g$ along boundary components outside of bad balls. In particular, we say that $u_{\e}$ is positively oriented with respect to $g$ at $x\in\Gamma$ (p.o.) provided $\langle u(x),g(x)\rangle>0$ (which corresponds to the first condition of \eqref{bd:gplusstrong}) and that $u_{\e}$ is negatively oriented (n.o.) in the opposite case $\langle u(x),g(x)\rangle<0$ (the second condition of \eqref{bd:gplusstrong}).\\
\begin{figure}[h]
    \centering
    \includegraphics{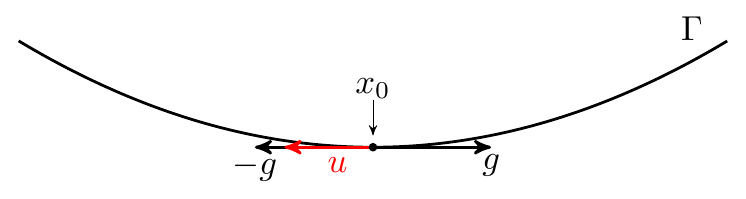} 
    \caption{Depiction of negative orientation with respect to $g$ for a strong tangential solution $u$ at a point $x_0\in\Gamma$ away from bad balls.} 
\end{figure}

Upon starting at the point
\begin{equation*}
q_{\e,j}^+=q_{\e,j}^+(\rho):=\partial B_{\rho}(q_{\e,j})\cap \Gamma_{\lambda\e,R}^+(q_{\e,j})
\end{equation*}
and traveling along the arc $\partial B_{\rho}(q_{\e,j})\cap\Omega$, one arrives at the point
\begin{equation*}
q_{\e,j}^-=q_{\e,j}^-(\rho):=\partial B_{\rho}(q_{\e,j})\cap \Gamma_{\lambda\e,R}^-(q_{\e,j})
\end{equation*}
for which $u_{\e}$ has accumulated an approximate net number of $\pi$-rotations. More precisely, there exists a unique integer $k\in\mathbb{Z}$ such that
\begin{equation}\label{bd:gnegstrong}
|\psi(\rho,\theta_2(\rho))-\gamma(\rho,\theta_2(\rho))-k\pi|=0\quad \mbox{on}\ \Gamma_{\lambda\e,R}^{-}(q_{\e,j}).
\end{equation}
With this information, we may now define the boundary index. Let $w:A_{\lambda\e,R}(q_{\e,j})\to\mathbb{S}^1$ be defined by
\begin{equation*}
w:=\left(\frac{u_{\e}}{|u_{\e}|}\right)^2=e^{2i\psi}.
\end{equation*}
Using \eqref{bd:gplusstrong} and \eqref{bd:gnegstrong},
\begin{align*}
\left\{ 
\begin{array}{ll}
|\arg(w)-2\gamma|=2|\psi-\gamma|=0 & \mbox{on $\Gamma_{\lambda\e,R}^{+}(q_{\e,j})$ if $u_{\e}$ is p.o.,}\\[0.5em]
|\arg(w)-2\gamma\pm2\pi|=2|\psi-\gamma\pm\pi|=0 & \mbox{on $\Gamma_{\lambda\e,R}^{+}(q_{\e,j})$ if $u_{\e}$ is n.o.,}\\[0.5em]
|\arg(w)-2\gamma-2\pi k|=2|\psi-\gamma-k\pi|=0 & \mbox{on $\Gamma_{\lambda\e,R}^{-}(q_{\e,j})$},
\end{array}
\right.
\end{align*}
and so $w$ has preserved orientation on $\Gamma^{\pm}_{\lambda\e,R}(q_{\e,j})$ with respect to $g^2$. Therefore we may extend $w$ to $x\in \Gamma_{\lambda\e}(q_{\e,j})$ as an $\mathbb{S}^1$-valued, piecewise $C^2$ map by simply setting $w=g^2$ along $\Gamma_{\lambda\e}(q_{\e,j})$. The reader can refer to Figure \ref{fig:w} for an illustration.
\begin{figure}[h]
  \begin{subfigure}[b]{0.5\linewidth}
    \centering
    \includegraphics[width=0.95\linewidth]{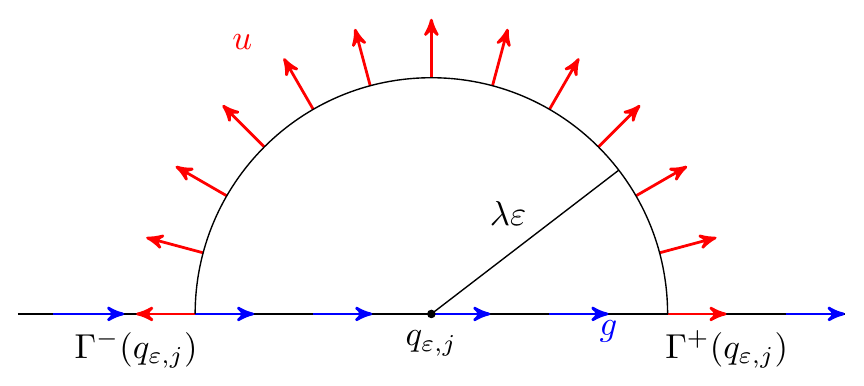} 
    \caption{Possible profile of a strong tangential\\ solution $u$ outside a boundary bad ball} 
    \vspace{1ex}
  \end{subfigure}
  \begin{subfigure}[b]{0.5\linewidth}
    \centering
    \includegraphics[width=0.95\linewidth]{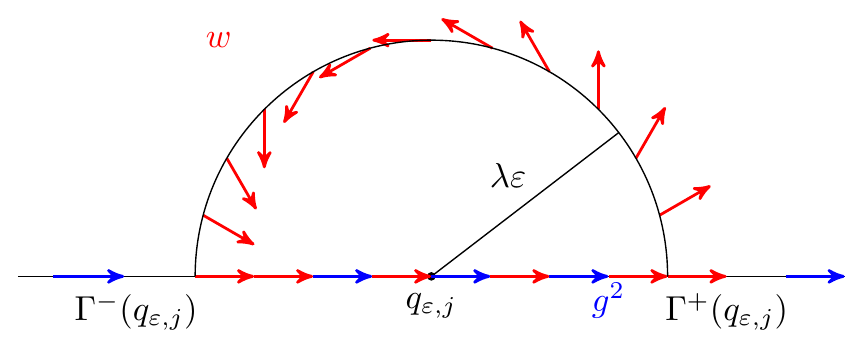} 
    \caption{Corresponding profile of $w$ outside a \\ boundary bad ball with interior extension} 
    \vspace{1ex}
  \end{subfigure} 
 \caption[Orientations]{Relationship between $u$, $w$, $g$ and $g^2$.}
 \label{fig:w}
\end{figure}
Next, for any $\rho\in[\lambda\e,R]$, form the closed curve
\begin{equation*}
C_{\rho}:=(\partial B_{\rho}(q_{\e,j})\cap\Omega)\cup \Gamma_{\rho}(q_{\e,j})
\end{equation*}
with positive orientation. By construction of $w$ we may define
\begin{equation}\label{def:bdryindex}
D_j=D_j(q_{\e,j}):=\deg(w;C_{\rho})
\end{equation}
whose value is independent of $\rho\in[\lambda,R]$ for any particular extension of $w$ by properties of the degree. Returning to the local polar coordinate system $(\rho,\theta)$ centered at $q_{\e,j}$, we note that
\begin{equation*}
\arg(w)=2\psi=2D_j\theta+\tilde{\phi}(\rho,\theta)
\end{equation*}
where $\tilde{\phi}$ is a single-valued function in $A_{\lambda\e,R}(q_{\e,j})$. Therefore $u_{\e}=f(\rho,\theta)e^{i\psi}$ with
\begin{equation}\label{brphase}
\psi(\rho,\theta)=D_j\theta+\phi(\rho,\theta)
\end{equation}
and $\phi$ a single-valued function in $\overline{A_{\lambda\e,R}(q_{\e,j})}$. The integer $D_j\in\mathbb{Z}$ is what we define as the boundary index. In some cases, we will use the notation 
\begin{equation*}
D_j:=\ind(u_{\e};\partial B_{\rho}(q_{\e,j})\cap\Omega).
\end{equation*}
The boundary index for weakly tangential solutions can be constructed in the same way, but now with boundary bad balls having radii $\rho=\lambda\e^s$. However, the phase of $u_{\e}$ along $\Gamma_{\lambda\e^s,R}^{\pm}(q_{\e,j})$ no longer satisfies the strict conditions of \eqref{bd:gplusstrong} and \eqref{bd:gnegstrong}. In this case, the definition of $S_{\e}^{g,s}$ can be used to show that
\begin{equation}\label{bd:gplus}
|\psi(\rho,\theta_1(\rho))-\gamma(\rho,\theta_1(\rho))|<\frac{\pi}{6}\quad \mbox{or}\quad |\psi(\rho,\theta_1(\rho))-\gamma(\rho,\theta_1(\rho))\pm\pi|<\frac{\pi}{6}
\end{equation}
on $\Gamma_{\lambda\e^s,R}^{+}(q_{\e,j})$ (depending on the orientation of $u$ with respect to $g$), and that there is a unique integer $k\in\mathbb{Z}$ such that
\begin{equation}\label{bd:gneg}
|\psi(\rho,\theta_2(\rho))-\gamma(\rho,\theta_2(\rho))-k\pi|<\frac{\pi}{6}\quad \mbox{on}\ \Gamma_{\tilde{r},R}^{-}(q_{\e,j}).
\end{equation}
\begin{figure}[h]
\includegraphics{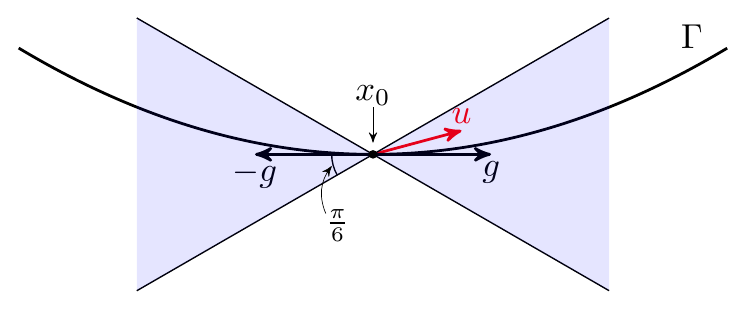}
\caption{Illustration of positive orientation with respect to $g$ for a weakly tangential solution $u$ at a point $x_0\in\Gamma$ away from bad balls. By definition of $S_{\e}^{g,s}$, the vector $u(x_0)$ may only reside within the shaded double cone with axis defined by $g$.}
\end{figure}\\
Again, by defining the $\mathbb{S}^1$-valued function $w=(u/|u|)^2$ on $A_{\lambda\e^s,R}(q_{\e,j})$, from \eqref{bd:gplus} and \eqref{bd:gneg} we have
\begin{align*}
\left\{ 
\begin{array}{ll}
|\arg(w)-2\gamma|=2|\psi-\gamma|<\frac{\pi}{3} & \mbox{on $\Gamma_{\lambda\e^s,R}^{+}(q_{\e,j})$ if $u_{\e}$ is p.o.,}\\[0.5em]
|\arg(w)-2\gamma\pm2\pi|=2|\psi-\gamma\pm\pi|<\frac{\pi}{3} & \mbox{on $\Gamma_{\lambda\e^s,R}^{+}(q_{\e,j})$ if $u_{\e}$ is n.o.,}\\[0.5em]
|\arg(w)-2\gamma-2\pi k|=2|\psi-\gamma-k\pi|<\frac{\pi}{3} & \mbox{on $\Gamma_{\lambda\e^s,R}^{-}(q_{\e,j})$},
\end{array}
\right.
\end{align*}
which as before, shows that the orientation of $w$ with respect to $g^2$ is preserved on $\Gamma_{\lambda\e^s,R}^{\pm}(q_{\e,j})$. We can now extend $w$ to $x\in \Gamma_{\lambda\e^s}(q_{\e,j})$ as an $\mathbb{S}^1$-valued, piecewise $C^2$ map satisfying $|\arg(w)-2\gamma|<\pi/3$ which can be done via interpolating the phase linearly across $\Gamma_{\lambda\e^s}(q_{\e,j})$, for example. The boundary index can now be defined in the same way as the strong tangential case.\\

The first main identity we obtain from this definition connects the sum of all associated bad ball boundary indices with $\mathcal{D}=\deg(g;\Gamma)$ and the sum of degrees for the interior bad balls.
\begin{prop}\label{prop:winding}
[Strong Tangential Case] Suppose $u_{\e}$ is a solution of \eqref{eq:ELstrong} with associated bad ball covering $\{B_{\lambda\e}(p_{\e,i}),B_{\lambda\e}(q_{\e,j})\}_{1\leq i\leq I_{\e},1\leq j\leq J_{\e}}$. Let
\begin{align*}
d_i=\deg(u_{\e};\partial B_{\lambda\e}(p_{\e,i}))\quad\mbox{and}\quad D_j=\ind(u_{\e};\partial B_{\lambda\e}(q_{\e,j})\cap\Omega)
\end{align*}
be the degrees and boundary indices for $u_{\e}$ about its interior and boundary bad balls respectively. Then
\begin{equation}\label{eq:windingidentity}
\mathcal{D}=\sum_{i=1}^{I_{\e}}d_i+\frac{1}{2}\sum_{j=1}^{J_{\e}}D_j.
\end{equation}
[Weak Tangential Case] Identity \eqref{eq:windingidentity} holds for solutions of \eqref{eq:ELweak} and its associated bad ball covering $\{B_{\lambda\e}(p_{\e,i}),B_{\lambda\e^s}(q_{\e,j})\}_{1\leq i\leq I_{\e},1\leq j\leq J_{\e}}$.
\end{prop}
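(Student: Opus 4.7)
The plan is to compute the winding of the auxiliary map $w := (u_{\e}/|u_{\e}|)^2$ on the ``good'' region in two ways and equate them. Let $\rho = \lambda\e$ in the strong case and $\rho = \lambda\e^s$ in the weak case, and set
\begin{equation*}
\Omega' := \Omega \setminus \left(\bigcup_{i=1}^{I_\e} \overline{B_{\lambda\e}(p_{\e,i})} \cup \bigcup_{j=1}^{J_\e} \overline{\omega_{\rho}(q_{\e,j})}\right).
\end{equation*}
By Proposition~\ref{prop:uniformballbound}, $|u_{\e}| \geq 1/2$ on $\overline{\Omega'}$, so $w$ is a smooth $\mathbb{S}^1$-valued map on $\overline{\Omega'}$, and extends continuously to each $\Gamma_{\rho}(q_{\e,j})$ as specified in the construction of the boundary index. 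Hence $d(\arg w)$ is a well-defined closed $1$-form on a neighbourhood of $\overline{\Omega'}$.

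The first key computation is $\deg(w;\Gamma) = 2\mathcal{D}$. On $\Gamma \setminus \bigcup_j \Gamma_{\rho}(q_{\e,j})$ the orientation condition (exactly $u_{\e} = \pm g$ in the strong case, or $|\arg w - 2\gamma| < \pi/3$ in the weak case) forces $w$ to lie within a pointwise angular distance $<\pi$ of $g^2$; the same bound holds on each $\Gamma_{\rho}(q_{\e,j})$ by construction of the extension. Therefore $w$ is homotopic to $g^2$ on $\Gamma$ as $\mathbb{S}^1$-valued maps, giving $\deg(w;\Gamma) = \deg(g^2;\Gamma) = 2\mathcal{D}$.

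Next I would apply Stokes' theorem to $d(\arg w)$ on $\Omega'$, orienting $\partial \Omega'$ so that $\Omega'$ lies on the left:
\begin{equation*}
0 = \int_{\Gamma_{\text{out}}} d(\arg w) - \sum_{i=1}^{I_\e} \int_{\partial B_{\lambda\e}(p_{\e,i})} d(\arg w) + \sum_{j=1}^{J_\e} \int_{\alpha_j} d(\arg w),
\end{equation*}
where $\Gamma_{\text{out}} := \Gamma \setminus \bigcup_j \Gamma_\rho(q_{\e,j})$, the interior circles are positively (counterclockwise) oriented, and $\alpha_j$ is the arc $\partial B_\rho(q_{\e,j}) \cap \Omega$ traversed from $q_{\e,j}^-$ to $q_{\e,j}^+$. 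The interior contributions are $\int_{\partial B_{\lambda\e}(p_{\e,i})} d(\arg w) = 4\pi d_i$, since squaring $u_\e/|u_\e|$ doubles the winding. For the boundary arcs, the definition $2\pi D_j = \int_{C_\rho} d(\arg w)$, where $C_\rho$ is the arc from $q_{\e,j}^+$ to $q_{\e,j}^-$ followed by $\Gamma_\rho(q_{\e,j})$ from $q_{\e,j}^-$ to $q_{\e,j}^+$, gives
\begin{equation*}
\int_{\alpha_j} d(\arg w) = \int_{\Gamma_\rho(q_{\e,j})} d(\arg w) - 2\pi D_j.
\end{equation*}

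Substituting and noting that $\int_{\Gamma_{\text{out}}} + \sum_j \int_{\Gamma_\rho(q_{\e,j})} = \int_\Gamma d(\arg w) = 4\pi \mathcal{D}$ yields
\begin{equation*}
0 = 4\pi \mathcal{D} - 4\pi \sum_{i=1}^{I_\e} d_i - 2\pi \sum_{j=1}^{J_\e} D_j,
\end{equation*}
which is exactly \eqref{eq:windingidentity} after dividing by $4\pi$. I expect the main obstacle to be purely bookkeeping: keeping the orientation conventions for $\partial \Omega'$, the half-arcs $\alpha_j$, and the reference curves $C_\rho$ mutually consistent. The weak case introduces no new analytic content beyond verifying that the extension of $w$ through each $\Gamma_{\lambda\e^s}(q_{\e,j})$ preserves the homotopy class on $\Gamma$, which is immediate from the $\pi/3$ bound on $|\arg w - 2\gamma|$.
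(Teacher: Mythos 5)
Your proposal is correct and follows essentially the same route as the paper: both arguments work with $w=(u_{\e}/|u_{\e}|)^2$, use $\deg(w;\Gamma)=\deg(g^2;\Gamma)=2\mathcal{D}$ via the orientation-preserving extension across each $\Gamma_{\rho}(q_{\e,j})$, and then do the same winding bookkeeping over the punctured domain (the paper excises only the boundary half-discs and invokes additivity of the degree over the enclosed interior vortices, which is just a repackaging of your Stokes computation). Your orientation conventions for $\alpha_j$ and $C_\rho$ are consistent, so the final identity comes out correctly.
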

The proof for the weak tangential case is done identically to that of the strong tangential case simply by replacing the radii $\lambda\e$ of the boundary bad balls with $\lambda\e^s$. Thus, we provide a proof only for strong tangential solutions. 
\begin{proof}
Define the domain
\begin{equation*}\label{def:punctureddomain}
\tilde{\Omega}:=\Omega\setminus\left\{\bigcup_{j=1}^{J_{\e}}\overline{\omega_{\lambda\e}(q_{\e,j})}\right\}
\end{equation*}
and let $\tilde{\Gamma}=\partial\tilde{\Omega}$, $C(q_{\e,j})=\partial\omega_{\lambda\e}(q_{\e,j})\cap\Omega$. As in the definition of boundary index \eqref{def:bdryindex}, the function $w=(u_{\e}/|u_{\e}|)^2$ is defined on $\tilde{\Omega}$ and $\tilde{\Gamma}$ and can be extended across each segment $\Gamma_{\lambda\e}(q_{\e,j})$. By the construction of the extension $w$, we have $\deg(w;\Gamma)=\deg(g^2;\Gamma)=2\mathcal{D}$ and so by the definition of boundary index,
\begin{align*}
\deg(w;\tilde{\Gamma})&=\frac{1}{2\pi}\int_{\Gamma\setminus\cup_{j}\Gamma_{\lambda\e}(q_j)}(iw,\partial_{\tau}w)\,ds+\sum_{j=1}^{J_{\e}}\frac{1}{2\pi}\int_{C(q_j)}(iw,\partial_{\tau}w)\,ds\\
&=\frac{1}{2\pi}\int_{\Gamma}(iw,\partial_{\tau}w)\,ds-\sum_{j=1}^{J_{\e}}\frac{1}{2\pi}\int_{\partial\omega_{\lambda\e}(q_j)}(iw,\partial_{\tau}w)\,ds\\
&=\deg(w;\Gamma)-\sum_{j=1}^{J_{\e}}D_j=2\mathcal{D}-\sum_{j=1}^{J_{\e}}D_j.
\end{align*}
Lastly, the vortices $p_{\e,i}$ are contained inside $\tilde{\Gamma}$ and so $\deg(w;\tilde{\Gamma})=\sum_{i=1}^{I_{\e}}2d_i$ where we note that the degree along each interior bad ball is doubled by the definition of $w$. Thus, we obtain \eqref{eq:windingidentity} by equating the two quantities for $\deg(w;\tilde{\Gamma})$ and dividing by $2$. 
\end{proof}
We also have a local summation property holding between degrees and boundary indices:
\begin{lem}\label{lem:addthemup}
[Strong Tangential Case] Let $\mathcal{I}$ and $\mathcal{J}$ be sets of indices for a collection of bad balls $\{B_{\lambda \e}(p_{\e,i})\}_{i\in\mathcal{I}}\cup\{B_{\lambda\e}(q_{\e,j})\}_{j\in\mathcal{J}}$ for a strongly tangential solution $u$ and suppose there is a point $y_0\in\Gamma$ and radius $R>0$ such that the ball $\mathscr{B}_R(y_0)$ satisfies
\begin{equation*}
\left(\bigcup_{i\in \mathcal{I}}B_{\lambda\e}(p_{\e,i})\cup \bigcup_{j\in \mathcal{J}}B_{\lambda\e}(q_{\e,j})\right)\subset \mathscr{B}_R(y_0)
\end{equation*}
where $\overline{\mathscr{B}_R(y_0)}$ does not intersect the closure of any other bad ball. Then if $\mathscr{D}=\ind(u_{\e};\partial\mathscr{B}_R(y_0)\cap\Omega)$, $d_i=\deg(u_{\e};\partial B_{\lambda\e}(p_{\e,i}))$ and $D_j=\ind(u_{\e};\partial B_{\lambda\e}(q_{\e,j})\cap\Omega)$, we have
\begin{equation*}
\mathscr{D}=\sum_{j\in \mathcal{J}}D_j+2\sum_{i\in\mathcal{I}}d_i.
\end{equation*}
[Weak Tangential Case] Under the same hypotheses but with boundary bad ball radii replaced by $\lambda\e^s$, the above identity also holds for a collection of bad balls $\{B_{\lambda \e}(p_{\e,i})\}_{i\in\mathcal{I}}\cup\{B_{\lambda\e^s}(q_{\e,j})\}_{j\in\mathcal{J}}$ for weakly tangential solutions. 
\end{lem}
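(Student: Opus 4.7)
The plan is to mimic the argument of Proposition \ref{prop:winding} in a localized setting. Set
\[
U := (\mathscr{B}_R(y_0) \cap \Omega) \setminus \Big(\bigcup_{i \in \mathcal{I}} \overline{B_{\lambda\e}(p_{\e,i})} \cup \bigcup_{j \in \mathcal{J}} \overline{\omega_{\lambda\e}(q_{\e,j})}\Big).
\]
By Theorem \ref{thm:eta} and the choice of the bad-ball covering, $|u_\e| \geq 1/2$ on $\overline{U}$, so $w := (u_\e/|u_\e|)^2$ is a $C^1$ map from $\overline{U}$ into $\mathbb{S}^1$. As in Section \ref{sec:wind}, $w$ admits a continuous, piecewise $C^2$, $\mathbb{S}^1$-valued extension across every $\Gamma$-segment meeting $\overline{U}$. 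I would fix one such extension globally on $\Gamma_R(y_0)$ and then use its restrictions when evaluating $\mathscr{D} = \deg(w;\mathscr{C}_R)$ along $\mathscr{C}_R = (\partial\mathscr{B}_R(y_0) \cap \Omega) \cup \Gamma_R(y_0)$ and each $D_j = \deg(w; C_j)$ along $C_j = (\partial B_{\lambda\e}(q_{\e,j}) \cap \Omega) \cup \Gamma_{\lambda\e}(q_{\e,j})$; by the homotopy invariance of the degree, this is consistent with the definitions recorded in Section \ref{sec:wind}.

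The 1-form $(iw, dw)$ is closed on $U$ (a direct computation from $|w|^2 \equiv 1$), so Stokes' theorem on the multiply-connected domain $U$ yields
\[
\frac{1}{2\pi}\int_{\partial U}(iw,\partial_\tau w)\,ds = 0,
\]
with $\partial U$ oriented as the boundary of $U$. I would decompose $\partial U$ into (a) $\partial\mathscr{B}_R(y_0) \cap \Omega$, (b) $\Gamma_R(y_0) \setminus \bigcup_j \Gamma_{\lambda\e}(q_{\e,j})$, (c) the arcs $\partial B_{\lambda\e}(q_{\e,j}) \cap \Omega$ for $j \in \mathcal{J}$, and (d) the circles $\partial B_{\lambda\e}(p_{\e,i})$ for $i \in \mathcal{I}$, and then add and subtract the integrals over the omitted segments $\Gamma_{\lambda\e}(q_{\e,j})$. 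The pieces reassemble: (a) together with (b) and the added $\Gamma_{\lambda\e}(q_{\e,j})$-integrals forms $\mathscr{C}_R$, while (c) together with the subtracted $\Gamma_{\lambda\e}(q_{\e,j})$-integrals assembles into the closed curves $C_j$. The Stokes identity therefore rearranges into
\[
\mathscr{D} - \sum_{j \in \mathcal{J}} D_j - \sum_{i \in \mathcal{I}} \deg\bigl(w;\partial B_{\lambda\e}(p_{\e,i})\bigr) = 0,
\]
and since $w = (u_\e/|u_\e|)^2$ we have $\deg(w;\partial B_{\lambda\e}(p_{\e,i})) = 2d_i$, giving the claimed identity.

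The weak tangential case is handled identically, with $\lambda\e$ replaced by $\lambda\e^s$ for boundary bad balls; the required extension of $w$ across $\Gamma_{\lambda\e^s}(q_{\e,j})$ is provided by the $\pi/3$-cone construction \eqref{bd:gplus}--\eqref{bd:gneg}, and the topological computation above depends only on the existence and continuity of a single global extension, not on the particular recipe used to build it. The main technical nuisance I expect is the orientation bookkeeping on the multiply-connected domain $U$ and verifying that the $\Gamma_R(y_0)$-integrals telescope cleanly under a single consistent choice of extension; once this is organized, the proof is the strictly local analogue of Proposition \ref{prop:winding}.
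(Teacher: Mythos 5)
Your proposal is correct and is essentially the paper's own argument: both define $w=(u_\e/|u_\e|)^2$, extend it across the segments $\Gamma_{\lambda\e}(q_{\e,j})$ as in the construction of the boundary index, and compare the degree along the outer curve with the contributions of the excised balls by adding and subtracting the integrals of $(iw,\partial_\tau w)$ over the omitted boundary segments. The only cosmetic difference is that you excise the interior bad balls and invoke Stokes on the fully punctured domain, whereas the paper keeps them inside $\tilde{\Omega}$ and reads off $\deg(w;\tilde{\Gamma})=\sum_i 2d_i$ directly from the enclosed singularities.
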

The proof of this Lemma follows the same lines as Proposition \ref{prop:winding}, but can also be shown using longer methods found in \cite[Lemma 4.7]{vanbrussel22}. As above, the proof for the weak tangential case is done identically to that of the strong tangential case by replacing the radii scaling of the boundary bad balls accordingly. To this end, we proceed with the strong tangential case only. 
\begin{proof}
Let $\tilde{\Omega}=\mathscr{B}_R(y_0)\setminus \cup_{j\in\mathcal{J}}\overline{\omega_{\lambda\e}(q_j)}$, $\tilde{\Gamma}=\partial\tilde{\Omega}$ and $C(q_j)=\partial\omega_{\lambda\e}(q_j)\cap\Omega$. Since $|u|\geq 1/2$ in $\tilde{\Omega}$, the function $w=(u/|u|)^2$ is defined on $\overline{\tilde{\Omega}}$. As in the construction of the boundary index, $w$ can be appropriately extended across each segment $\Gamma_{\lambda\e}(q_j)$, $j\in\mathcal{J}$, and so we take $w$ to be defined on $\tilde{\Gamma}\cup_{j\in\mathcal{J}}\Gamma_{\lambda\e}(q_j)$. In particular, this extension of $w$ is defined on $\partial\mathscr{B}_R(y_0)$ and so by definition of the boundary index,
\begin{equation}\label{def:scriptD}
\mathscr{D}=\ind(u;\partial\mathscr{B}_R(y_0)\cap\Omega)=\deg(w;\partial \mathscr{B}_{R}(y_0)).
\end{equation}
Calculating the degree of $w$ along $\tilde{\Gamma}$, we have by \eqref{def:scriptD}
\begin{align*}
\deg(w;\tilde{\Gamma})&=\frac{1}{2\pi}\int_{\partial\mathscr{B}_{R}(y_0)\setminus\cup_{j}\Gamma_{\lambda\e}(q_j)}(iw,\partial_{\tau}w)\,ds+\sum_{j\in\mathcal{J}}\frac{1}{2\pi}\int_{C(q_j)}(iw,\partial_{\tau}w)\,ds\\
&=\frac{1}{2\pi}\int_{\partial\mathscr{B}_{R}(y_0)}(iw,\partial_{\tau}w)\,ds-\sum_{j\in\mathcal{J}}\frac{1}{2\pi}\int_{\partial\omega_{\lambda\e}(q_j)}(iw,\partial_{\tau}w)\,ds\\
&=\mathscr{D}-\sum_{j\in\mathcal{J}}D_j.
\end{align*}
On the other hand, each circle $\partial\omega_{\lambda\e}(p_i)$, $i\in\mathcal{I}$, is contained inside $\tilde{\Gamma}$ and so
\begin{equation*}
\deg(w;\tilde{\Gamma})=\sum_{i\in\mathcal{I}}\deg(w;\partial\omega_{\lambda\e}(p_i))=2\sum_{i\in\mathcal{I}}d_i
\end{equation*}
proving the desired identity. 
\end{proof}
\section{Lower Bounds for the Energy and Convergence}
In this section, we display the details needed to complete the proof of Theorems \ref{thm:big1} and \ref{thm:big2}, which mainly comes down to providing a lower bound for the energies $E_{\e}$ and $E_{\e}^{g,s}$ on the ball collections $\mathcal{S}_{\sigma}$ and $\mathcal{S}_{\sigma}^{g,s}$ respectively. This result is given in Lemma \ref{lem:onsigmas} at the end of this section. The first step in our analysis is to calculate the cost of a vortex locally on annular regions $A_{r,R}$ with $r<R$. To do this, it is useful to begin by characterizing solutions of \eqref{eq:ELstrong} and \eqref{eq:ELweak} in terms of a local polar representation with central point $x_0\in\overline{\Omega}$. Depending on whether $x_0\in\Omega$ or $x_0\in\Gamma$, the representation for the phase of $u_{\e}$ will look slightly different. In any case, the general form for $u_{\e}$ on $A_{r,R}(x_0)$,\ $x_0\in\overline{\Omega}$ can be given by
\begin{equation*}
u_{\e}(\rho,\theta)=f(\rho,\theta)e^{i\psi(\rho,\theta)}\ \mbox{on}\ A_{r,R}(x_0),\ \rho\in [r,R]
\end{equation*}
where $\rho=|x-x_0|$, $\theta$ is an appropriately chosen polar angle and $f(\rho,\theta)=|u_{\e}|$. In the specific case when $x_0\in\Gamma$ there are four general scenarios which can occur for solutions on $\Gamma^{\pm}_{r,R}(x_0)$ when $\langle u,g\rangle \neq 0$:
\begin{enumerate}[(a)]
\item $u$ is p.o.\@ on $\Gamma_{r,R}^+$ and n.o.\@ on $\Gamma_{r,R}^-$,
\item $u$ is p.o.\@ on $\Gamma_{r,R}^+$ and on $\Gamma_{r,R}^-$,
\item $u$ is n.o.\@ on $\Gamma_{r,R}^+$ and p.o.\@ on $\Gamma_{r,R}^-$,
\item $u$ is n.o.\@ on $\Gamma_{r,R}^+$ and on $\Gamma_{r,R}^-$.
\end{enumerate}
To accommodate for these four cases, we define a polar representation for $u_{\e}$ on $A_{r,R}(x_0)$ whose phase depends on the orientation with respect to $g$ when near the boundary.  Let $\gamma(x)$ be such that $g(x)=e^{i\gamma(x)}$ along $\Gamma_R(x_0)$ with $\gamma_0=\gamma(x_0)$ provided $x_0\in\Gamma$. By modifying the single-valued function $\phi$ from \eqref{brphase} if necessary, the phase $\psi$ for $u_{\e}$ can be given as
\begin{align}\label{eq:upolar}
\psi(\rho,\theta)=\left\{
\begin{array}{ll}
d\theta+\phi(\rho,\theta) & \mbox{if $B_R(x_0)\subset\Omega$}\\[0.5em]
D\theta+\gamma_0+\phi(\rho,\theta) & \mbox{if $x_0\in\Gamma$ and $u$ is p.o.\@ on $\Gamma_{r,R}^+$},\\[0.5em]
D\theta+\gamma_0+\phi(\rho,\theta) \pm\pi & \mbox{if $x_0\in\Gamma$ and $u$ is n.o.\@ on $\Gamma_{r,R}^+$}.
\end{array}
\right.
\end{align}
In this form, $\phi$ is a smooth, single-valued function defined on $A_{r,R}(x_0)$ and can be thought of strictly as a function of $\rho>0$ on $\Gamma_{r,R}^{\pm}$ by the choice of coordinates given in \eqref{eq:paramsgammas}. That is, $\phi=\phi(\rho,\theta(\rho))$ on $\Gamma_{r,R}^{\pm}$. The integers $d=\deg(u;\partial B_{\rho}(x_0))$, $D=\ind(u;\partial B_{\rho}(x_0)\cap\Omega)\in\mathbb{Z}$ are the associated degree and boundary index for $u_{\e}$ respectively. Through representation \eqref{eq:upolar}, the boundary index $D$ determines the orientation of $u$ along $\Gamma_{r,R}^-$. Indeed,  when $R$ is taken to be small and $D$ is even, the phase difference across $\Gamma_{r,R}^{\pm}$ will be approximately an even multiple of $\pi$. In this case, the orientation of $u_{\e}$ with respect to $g$ will be maintained along $\Gamma_{r,R}^{\pm}$ (cases (b) and (d)). When $D$ is odd, the orientation of $u_{\e}$ with respect to $g$ changes sign, giving cases (a) and (c).\\

The function $\phi$ plays an important role in estimating the energy contribution of a defect and it is critical to show that it is appropriately bounded. The following proposition is needed for this estimation process.
\begin{prop}\label{ineq:phibound}
Let $\phi$ be as defined in (\ref{eq:upolar}) and suppose $|u|\geq 1/2$, $|\langle u,g^{\perp}\rangle|\leq 1/4$ on $\Gamma_{r,R}^{\pm}$. Then there exists a constant $C>0$ for which $|\phi|\leq C(|\langle u,g^{\perp}\rangle|+\rho)$. In the special case that $\langle u,g^{\perp}\rangle=0$ on $\Gamma_{r,R}^{\pm}$, we have the simplified bound $|\phi|\leq C\rho$.
\end{prop}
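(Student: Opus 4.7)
The plan is to exploit the complex-variable identification $\mathbb{R}^2 \cong \mathbb{C}$ to relate the unknown phase $\psi$ of $u$ to the known phase $\gamma$ of $g$ via the projections $u_\parallel$, $u_\perp$. Writing $g = e^{i\gamma}$, one has $g^\perp = ie^{i\gamma}$, so the decomposition $u = u_\parallel g + u_\perp g^\perp$ becomes
\begin{equation*}
u = (u_\parallel + i u_\perp)\, e^{i\gamma}\quad \text{on }\Gamma_{r,R}^{\pm}(x_0),
\end{equation*}
and since $u = f e^{i\psi}$ with $f = |u|$ and $|u_\parallel + i u_\perp| = f$, we obtain the pointwise identity
\begin{equation*}
\psi - \gamma \equiv \arg(u_\parallel + i u_\perp) \pmod{2\pi}.
\end{equation*}

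Next I would exploit the hypotheses $|u| \geq 1/2$ and $|u_\perp| \leq 1/4$ to get the pointwise bound $|u_\parallel|^2 \geq 1/4 - 1/16 = 3/16$, hence $|u_\parallel| \geq \sqrt{3}/4$. The orientation with respect to $g$ fixes the sign of $u_\parallel$: positive orientation gives $u_\parallel \geq \sqrt{3}/4$ and $\arg(u_\parallel + i u_\perp) = \arctan(u_\perp/u_\parallel) \in (-\pi/3, \pi/3)$; negative orientation gives $u_\parallel \leq -\sqrt{3}/4$ and $\arg(u_\parallel + i u_\perp) = \pi + \arctan(u_\perp/u_\parallel) \pmod{2\pi}$. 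In either case, the elementary bound $|\arctan(t)| \leq |t|$ yields
\begin{equation*}
\bigl|\arg(u_\parallel + i u_\perp) - \epsilon\pi\bigr| \leq \tfrac{4}{\sqrt{3}}\, |u_\perp|,
\end{equation*}
where $\epsilon \in \{0,1\}$ records the orientation.

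Now I would substitute the polar representation \eqref{eq:upolar} for $\psi$ and solve for $\phi$ on each boundary arc. On $\Gamma_{r,R}^{+}$, parametrized by $(\rho, \theta_1(\rho))$ with $|\theta_1(\rho)| \leq c\rho$, the $D\theta$ (or $d\theta$ in the interior case) contribution is $O(|D|\rho)$, and by $C^1$ smoothness of $\gamma$ in $\mathcal{N}_\Gamma$ we have $|\gamma(\rho,\theta_1(\rho)) - \gamma_0| \leq C\rho$; together with the formula for $\psi$ and the displayed bound on $\arg(u_\parallel + iu_\perp)$, this gives $|\phi| \leq C(|u_\perp| + \rho)$ on $\Gamma_{r,R}^{+}$ after choosing the single-valued representative of $\phi$ consistent with its smooth extension to $A_{r,R}(x_0)$. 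On $\Gamma_{r,R}^{-}$, parametrized by $(\rho, \theta_2(\rho))$ with $|\pi - \theta_2(\rho)| \leq c\rho$, I would write $D\theta_2(\rho) = D\pi - D(\pi - \theta_2(\rho))$. The parity of $D$ then matches the four cases (a)--(d): even $D$ gives $D\pi \equiv 0 \pmod{2\pi}$ and the same orientation on $\Gamma^-$ as on $\Gamma^+$; odd $D$ gives $D\pi \equiv \pi \pmod{2\pi}$ and the opposite orientation. In both sub-cases the shift $\epsilon\pi$ from the orientation is exactly absorbed by $D\pi$, so the same computation as on $\Gamma_{r,R}^{+}$ yields $|\phi| \leq C(|u_\perp| + \rho)$.

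The specialization $\langle u, g^\perp\rangle = 0$ is immediate: $u_\perp \equiv 0$ collapses the $|u_\perp|$ term, leaving $|\phi| \leq C\rho$. The main subtlety I anticipate is not any single inequality but the bookkeeping of \emph{which} $2\pi$-representative of $\arg(u_\parallel + iu_\perp)$ is the correct one to match the chosen single-valued $\phi$ from the boundary-index construction; the key observation that resolves this is that $\phi$ has already been fixed as a continuous function on $\overline{A_{r,R}(x_0)}$, so the representative is determined by its (bounded) value away from the defect, and the parity argument above shows this determination is precisely what makes the orientation shift and the $D\pi$ term cancel.
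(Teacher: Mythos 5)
Your proof is correct, and it reaches the estimate by a route that is structurally the same as the paper's (relate $\psi-\gamma$ to $u_\perp$ through the frame $\{g,g^\perp\}$, then peel off the $D\theta$ and $\gamma-\gamma_0$ contributions, which are $O(\rho)$ by \eqref{bounds:theta} and the smoothness of $\gamma$, handling the four orientation cases via the parity of $D$) but differs in how the key quantitative step is executed. The paper writes $|\langle u,g^\perp\rangle|=|u||\sin(\psi-\gamma)|$ and proceeds in two stages: first a crude bound $|\phi|\leq \pi/6+c\rho$ (hence $|\phi|\leq\pi/4$ for $R$ small), and only then a reverse-triangle-inequality expansion of $\sin(\phi+(D\theta+\gamma_0-\gamma))$ to extract $|\langle u,g^\perp\rangle|\geq \tfrac18|\phi|-\tfrac12 C\rho$. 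You instead write $u=(u_\parallel+iu_\perp)e^{i\gamma}$, note $|u_\parallel|\geq\sqrt3/4$ from the two pointwise hypotheses, and get $|\psi-\gamma-\epsilon\pi|=|\arctan(u_\perp/u_\parallel)|\leq \tfrac{4}{\sqrt3}|u_\perp|$ in one step; this eliminates the bootstrap and is, in my view, cleaner. The one point you should not leave implicit is the choice of the $2\pi$-representative: you correctly identify it as the only subtlety and resolve it the same way the paper does (the normalization of $\phi$ in \eqref{eq:upolar} pins the representative on $\Gamma^+_{r,R}$, continuity of $\phi$ on each connected arc keeps the integer constant, and the parity of $D$ makes the $D\pi$ shift absorb the orientation flip on $\Gamma^-_{r,R}$), so nothing is missing.
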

The result of Proposition \ref{ineq:phibound} is claimed in \cite{moser2003} for $g^{\perp}=n$ in the weak tangential case, but is not explicitly shown. We prove it here for completeness.
\begin{proof}
Observe the inner product
\begin{equation*}
|\langle u,g^{\perp}\rangle|=|u||\cos(\psi-(\gamma-\pi/2))|=|u||\sin(\psi-\gamma)|=|u||\sin(\psi-\gamma\pm\pi)|.
\end{equation*}
When $|\langle u,g^{\perp}\rangle|\leq 1/4$ and using the bound $|u|\geq 1/2$, we obtain $|\psi-\gamma|\leq \pi/6$ or $|\psi-\gamma\pm\pi|\leq \pi/6$ for all $x\in\Gamma^{\pm}_{r,R}$ depending on orientation. If $\langle u,g^{\perp}\rangle=0$, then we precisely get $|\psi-\gamma|=0$ or $|\psi-\gamma\pm\pi|=0$ which again depends on the orientation of $u$ with respect to $g$. By considering the four possible orientations for $u_{\e}$ (cases (a)--(d)) separately, it can be shown that there is a constant $c>0$ such that $|\phi|\leq \pi/6 + c\rho$ on $\Gamma^{\pm}_{r,R}$ in the weak tangential case and $|\phi|\leq c\rho$ in the strong tangential case. To see this, we analyze case (a) from above and claim the other cases follow similarly. When $u_{\e}$ is positively oriented on $\Gamma_{r,R}^+$ and negatively oriented on $\Gamma_{r,R}^-$ there is $\xi\in[-\pi/6,\pi/6]$ such that,
\begin{equation*}
\psi-\gamma=D\theta+\gamma_0-\gamma+\phi=\xi\ \quad\mbox{on}\ \Gamma_{r,R}^+
\end{equation*}
with $D\in 2\mathbb{Z}+1$. The triangle inequality gives
\begin{align*}
|\phi|\leq |\xi|+|D\theta|+|\gamma_0-\gamma|\leq \frac{\pi}{6}+c\rho.
\end{align*}
On $\Gamma_{r,R}^-$, we have $D\theta+\gamma_0-\gamma+\phi=D\pi+\xi$ and a similar estimate yields
\begin{equation*}
|\phi|\leq |\xi|+|D||\pi-\theta|+|\gamma_0-\gamma|\leq \frac{\pi}{6}+c\rho.
\end{equation*}
The strong tangential condition corresponds to the scenario where $\xi=0$ for each of the four cases (a)--(d), and so the estimate above can be reduced to $|\phi|\leq C\rho$ in this case, which finishes the proof for solutions satisfying $\langle u,g^{\perp}\rangle=0$ on $\Gamma_{r,R}^{\pm}$.\\\\
Assume $R$ is chosen small enough such that $c\rho\leq \pi/12$, for example, so that $|\phi|\leq \pi/4$ on $\Gamma^{\pm}_{r,R}$. Returning to the inner product and omitting the cases where we consider $\pm\pi$ in the argument, the reverse triangle inequality gives
\begin{equation*}
|\langle u,g^{\perp}\rangle|=|u||\sin(\psi-\gamma)|\geq \frac{1}{2}|\sin(\phi)||\cos(D\theta+\gamma_0-\gamma)|-\frac{1}{2}|\cos(\phi)||\sin(D\theta+\gamma_0-\gamma)|
\end{equation*}
which holds on $\Gamma^{\pm}_{r,R}$ for any of the four orientation scenarios. Since $\Gamma$ and $\gamma$ are smooth, for $R$ taken small enough it holds that
\begin{equation*}
|\sin(D\theta+\gamma_0-\gamma)|,\ |1-|\cos(D\theta+\gamma_0-\gamma)||\leq C\rho,
\end{equation*}
and so we may assume $|\cos(D\theta+\gamma_0-\gamma)|\geq 1/2$ on $\Gamma^{\pm}_{r,R}$. Thus,
\begin{equation*}
|\langle u,g^{\perp}\rangle|\geq\frac{1}{4}|\sin(\phi)|-\frac{1}{2}C\rho.
\end{equation*}
Finally, since $|\phi|\leq \pi/4$ we have
\begin{equation*}
|\langle u,g^{\perp}\rangle|\geq\frac{1}{8}|\phi|-\frac{1}{2}C\rho
\end{equation*}
which finishes the proof.
\end{proof}
As described in much of the surrounding literature, the energy contribution of a non-trivial interior defect for solutions of the Ginzburg--Landau equations on an annulus $A_{r,R}$ is known to be logarithmic in the ratio $R/r$ and depends on the \emph{square} of the degree $d$ of $u$ around the vortex. A similar result holds for boundary defects with associated boundary index $D$. This result is given in Theorem \ref{ineq:localanulus} below.
\begin{thm}\label{ineq:localanulus}
[Strong Tangential Case] Suppose $x_0\in\overline{\Omega}$ and assume that $1/2\leq |u|\leq 1$ in $A_{r,R}(x_0)$. Additionally, suppose $\langle u,g^{\perp}\rangle=0$ on $\Gamma_{r,R}^{\pm}(x_0)$ and that there is some number $K$ such that
\begin{align*}
&E_{\e}(u)\leq K|\ln\e|+K,\\[0.5em]
&\frac{1}{\e^2}\int_{\omega_{\e^{\gamma}}(x_0)}(1-|u|^2)^2\,dx\leq K,
\end{align*}
where $\e^{\gamma}$ is as in Theorem \ref{thm:eta}. Then there exists a constant $C$ depending only on $\Omega$, $\gamma$ and $K$ such that:
\begin{enumerate}[(i)]
\item If $B_R(x_0)\subset\Omega$, $\e\leq r<R\leq r_0$ and $d=\deg(u;\partial B_r(x_0))\neq 0$,
\begin{equation}\label{ineq:lower1}
\int_{A_{r,R}(x_0)}|\nabla u|^2\,dx \geq 2d^2\pi \ln\left(\frac{R}{r}\right)-C.
\end{equation}
\item If $x_0\in\Gamma$, $\e\leq r<R\leq r_0$ and $D=\ind(u;\partial B_r(x_0)\cap\Omega)\neq 0$,
\begin{equation}\label{ineq:lower2}
\int_{A_{r,R}(x_0)}|\nabla u|^2\,dx \geq D^2\pi\ln\left(\frac{R}{r}\right)-C.
\end{equation}
\end{enumerate}
[Weak Tangential Case] Suppose $x_0\in\overline{\Omega}$ and assume that $1/2\leq |u|\leq 1$ in $A_{r,R}(x_0)$. Additionally, suppose $|\langle u,g^{\perp}\rangle|\leq 1/4$ on $\Gamma_{r,R}^{\pm}$ and that there is some number $K$ such that
\begin{align*}
&E_{\e}^{g,s}(u)\leq K|\ln\e|+K,\\[0.5em]
&\frac{1}{\e^2}\int_{\omega_{\e^{\gamma}}(x_0)}(1-|u|^2)^2\,dx+\frac{1}{\e^s}\int_{\Gamma_{\e^{\gamma}}}\langle u,g^{\perp}\rangle^2\,ds\leq K,
\end{align*}
where $\e^{\gamma}$ is as in Theorem \ref{thm:eta}. Then there exists a constant $C$ depending only on $\Omega$, $\gamma$ and $K$ such that:
\begin{enumerate}[(i)]
\item If $B_R(x_0)\subset\Omega$, $\e\leq r<R\leq r_0$ and $d=\deg(u;\partial B_r(x_0))\neq 0$, then \eqref{ineq:lower1} holds.\\
\item If $x_0\in\Gamma$, $\e^s\leq r<R\leq r_0$ and $D=\ind(u;\partial B_r(x_0)\cap\Omega)\neq 0$, then \eqref{ineq:lower2} holds.
\end{enumerate}
\end{thm}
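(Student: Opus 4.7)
The plan is to reduce both cases to a one-dimensional (angular) energy estimate on each circle $\partial B_\rho(x_0) \cap \Omega$, which after integration against $d\rho/\rho$ produces the logarithmic term, and then to control the error from the amplitude $f = |u|$ and from the boundary phase defect $\phi$ using Proposition~\ref{ineq:phibound} and the hypotheses on $E_\e(u)$ and $\frac{1}{\e^2}\int_{\omega_{\e^\gamma}}(1-|u|^2)^2\,dx$.

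\textbf{Polar reduction.} Using the representation $u = f e^{i\psi}$ of \eqref{eq:upolar}, one computes
\[
|\nabla u|^2 = |\nabla f|^2 + f^2|\nabla\psi|^2 \;\geq\; \frac{f^2}{\rho^2}\,(\partial_\theta\psi)^2,
\]
so in polar coordinates
\[
\int_{A_{r,R}(x_0)}|\nabla u|^2\,dx \;\geq\; \int_r^R \frac{d\rho}{\rho}\int_{\Theta(\rho)} f^2(\partial_\theta\psi)^2\,d\theta,
\]
with $\Theta(\rho) = [0,2\pi]$ for interior balls and $\Theta(\rho) = [\theta_1(\rho),\theta_2(\rho)]$ for boundary balls, where $\theta_1,\theta_2$ satisfy \eqref{bounds:theta}. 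The Cauchy--Schwarz inequality applied to $|\partial_\theta\psi| = (1/f)\cdot f|\partial_\theta\psi|$ yields for each $\rho$
\[
\int_{\Theta(\rho)} f^2(\partial_\theta\psi)^2\,d\theta \;\geq\; \frac{\bigl(\int_{\Theta(\rho)}\partial_\theta\psi\,d\theta\bigr)^{2}}{\int_{\Theta(\rho)} f^{-2}\,d\theta}.
\]
Since $|u|\geq \tfrac12$ we can expand $1/f^2 \leq 1 + C(1-f^2)$, so the denominator is $|\Theta(\rho)| + O\!\bigl(\int(1-f^2)d\theta\bigr)$.

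\textbf{Interior case.} From $\psi = d\theta + \phi$ with $\phi$ single-valued, $\int_0^{2\pi}\partial_\theta\psi\,d\theta = 2\pi d$. Dividing the Cauchy--Schwarz lower bound and using $1/(a+b) \geq 1/a - b/a^2$ gives
\[
\int_0^{2\pi}\!\! f^2(\partial_\theta\psi)^2\,d\theta \;\geq\; 2\pi d^2 - C d^2\!\int_0^{2\pi}(1-f^2)\,d\theta.
\]
Integrating $d\rho/\rho$ over $[r,R]$ produces the main term $2\pi d^2\log(R/r)$ and an error $Cd^2\!\int_{A_{r,R}}(1-f^2)/\rho^2\,dx$. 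This error is controlled by splitting the annulus at $\rho = \e^\gamma$: on $\{\rho\leq \e^\gamma\}$ apply Cauchy--Schwarz with the $\eta$--compactness bound $\int(1-|u|^2)^2/\e^2 \leq K$ and $\int_{\omega_{\e^\gamma}}\e^2/\rho^4\,dx \leq C\e^2/r^2 \leq C$ (using $r\geq\e$); on $\{\rho \geq \e^\gamma\}$ use $1/\rho^2 \leq \e^{-2\gamma}$ together with $\int(1-f^2)^2 \leq 2\e^2 E_\e$, which, combined with $|u|\leq 1$ and a Cauchy--Schwarz trade-off against the potential term that can be absorbed into the hypothesis on $E_\e$, yields an $O(1)$ bound.

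\textbf{Boundary case.} Here $\psi = D\theta + \gamma_0 + \phi$ (up to $\pm\pi$), and $\int_{\theta_1}^{\theta_2}\partial_\theta\psi\,d\theta = D(\theta_2-\theta_1) + \phi(\rho,\theta_2)-\phi(\rho,\theta_1)$, with $|\theta_2-\theta_1-\pi|\leq c\rho$. The analogue of the circle estimate gives
\[
\int_{\theta_1}^{\theta_2}\!\! f^2(\partial_\theta\psi)^2\,d\theta \;\geq\; \pi D^2 - CD^2\rho - \frac{2|D|\bigl(|\phi(\rho,\theta_1)|+|\phi(\rho,\theta_2)|\bigr)}{1} - C D^2\!\int(1-f^2)\,d\theta,
\]
where Proposition~\ref{ineq:phibound} gives $|\phi(\rho,\theta_i)|\leq C\rho$ in the strong case and $|\phi(\rho,\theta_i)|\leq C(|\langle u,g^\perp\rangle(\rho,\theta_i)|+\rho)$ in the weak case. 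Integrating $d\rho/\rho$ produces the main term $\pi D^2\log(R/r)$; the $\rho$ corrections contribute $\int_r^R d\rho = O(R)$, and the $(1-f^2)$ correction is handled as in the interior case. In the weak case the additional term $\int_r^R |\langle u,g^\perp\rangle|/\rho\,d\rho$ is bounded by Cauchy--Schwarz against $\int_{\Gamma^\pm_{r,R}}\langle u,g^\perp\rangle^2\,ds$, which is of order $\e^s$ by the $\eta$--compactness bound on $\Gamma_{\e^\gamma}$ together with $r\geq \e^s$.

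\textbf{Main obstacle.} The delicate point is the uniform-in-$\e$ control of the error terms. In particular, one must balance (i) the $\eta$--compactness bound on the potential, which is sharp only on the core $\omega_{\e^\gamma}$, against the global energy bound growing like $|\ln\e|$; and (ii) in the weak boundary case, the cross-term $\int_r^R D(\phi(\rho,\theta_2)-\phi(\rho,\theta_1))/\rho\,d\rho$, where $\phi$ need not vanish on $\Gamma^\pm$. Proposition~\ref{ineq:phibound} is crucial here because it converts the pointwise failure of tangentiality directly into a quantity controlled by the boundary penalty $\e^{-s}\int \langle u,g^\perp\rangle^2\,ds$, which the hypotheses bound by $K$ on the relevant scale.
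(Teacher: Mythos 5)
Your proposal follows essentially the same route as the paper's: the polar representation \eqref{eq:upolar}, the angular winding ($2\pi d$, resp.\ $D(\theta_2-\theta_1)+\phi(\rho,\theta_2)-\phi(\rho,\theta_1)$) producing the logarithm upon integration against $d\rho/\rho$, and error control via Proposition~\ref{ineq:phibound} together with a split of the annulus at $\rho=\e^{\gamma}$ — the paper expands $f^2|\nabla(D\theta+\phi)|^2$ into $I_1+I_2+I_3$ and defers details to Struwe and Moser, while you use Cauchy--Schwarz on each circle, but these are interchangeable. One caution on the outer region: the literal order of operations you describe on $\{\rho\geq\e^{\gamma}\}$ (first $\rho^{-2}\leq\e^{-2\gamma}$, then estimating $\int(1-f^2)$) produces $\e^{1-2\gamma}\sqrt{|\ln\e|}\to\infty$ since $\gamma>3/4$; you must keep $\rho^{-2}$ inside the Cauchy--Schwarz, i.e.\ $\int_{A_{\e^{\gamma},R}}\frac{1-f^2}{\rho^2}\,dx\leq\bigl(\int\frac{(1-f^2)^2}{\e^2}\,dx\bigr)^{1/2}\bigl(\int\frac{\e^2}{\rho^4}\,dx\bigr)^{1/2}\leq C\bigl(|\ln\e|\,\e^{2-2\gamma}\bigr)^{1/2}\to 0$, and likewise in the weak boundary case the term $\int_{\e^{\gamma}}^{R}|\langle u,g^{\perp}\rangle|\rho^{-1}\,d\rho$ must be handled with the global energy bound together with $\gamma<s$, since the $\eta$-compactness hypothesis only controls the penalty on $\Gamma_{\e^{\gamma}}$.
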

\begin{proof}
The proof of inequality \eqref{ineq:lower1} is omitted since it follows identically to that of \cite[Proposition 3.4]{struwe1994asymptotic} and \cite[Proposition 3.4']{struwe1995}. For \eqref{ineq:lower2}, we provide a brief sketch to show how the boundary index appears and how the boundary conditions are handled. With this, we assume $x_0\in\Gamma$. Using the polar representation for $u$ on $A_{r,R}(x_0)$,
\begin{align*}
\int_{A_{r,R}}|\nabla u|^2\,dx&=\int_{A_{r,R}}\left(f^2|\nabla \psi|^2+|\nabla f|^2\right) dx\\[0.5em]
&\geq \int_{A_{r,R}}f^2|\nabla D\theta+\nabla \phi|^2\,dx\\[0.5em]
&=\int_{A_{r,R}}\frac{D^2f^2}{\rho^2}\,dx+\int_{A_{r,R}}\frac{2Df^2}{\rho^2}\partial_{\theta}\phi\,dx+\int_{A_{r,R}}f^2|\nabla \phi|^2\,dx\\[0.5em]
&=I_1+I_2+I_3.
\end{align*}
The lower estimates for $I_1$ and $I_3$ primarily follow \cite[Proposition 3.4]{struwe1994asymptotic} and \cite[Proposition 3.4']{struwe1995}, with details regarding the boundary given in \cite[Proposition 5.6]{moser2003} and \cite[Proposition 4.3]{alama2015weak}. Specifically, 
\begin{equation*}
I_1\geq D^2\pi^2\ln\left(\frac{R}{r}\right)-C,\quad I_3\geq \frac{1}{4}\int_{A_{r,R}(x_0)}|\nabla \phi|^2\,dx
\end{equation*}
where $C$ is a constant independent of $\e$. For the integral $I_2$, Proposition \ref{ineq:phibound} implies
\begin{equation*}
|\phi(\rho,\theta_2)-\phi(\rho,\theta_1)|\leq 2C\left(\sum_{x\in\partial\Gamma_{\rho}^{\pm}} |u_{\perp}(\rho,\theta_i(\rho))|+\rho\right)
\end{equation*}
with $u_{\perp}=0$ for strong tangential solutions. Applying bounding methods found in \cite[Proposition 5.6]{moser2003} and \cite[Proposition 3.4]{struwe1994asymptotic},
\begin{align*}
|I_2|&\leq\left|\int_{A_{r,R}}\frac{2D}{\rho^2}\partial_{\theta}\phi\,dx\right|+2\left|\int_{A_{r,R}}\frac{D(1-f^2)}{\rho^2}\partial_{\theta}\phi\,dx\right|\\[0.5em]
&\leq\int_{r}^{R}\frac{2|D||\phi(\rho,\theta_2)-\phi(\rho,\theta_1)|}{\rho}\,d\rho+\frac{1}{4}\int_{A_{r,R}}|\nabla \phi|^2\,dx+C\\[0.5em]
&\leq4|D|C\int_{\Gamma_{r,R}^{\pm}}\frac{|\langle u,g^{\perp}\rangle|}{\rho}\,d\rho+\frac{1}{4}\int_{A_{r,R}}|\nabla \phi|^2\,dx+C'.
\end{align*}
If $u$ is a strong tangential solution, the first integral in the last line above does not appear and so the estimate ends there. If $u$ is a weak tangential solution, the proof of \cite[Proposition 5.6]{moser2003}  can be followed with $|f\cdot\nu|$ replaced by $|\langle u,g^{\perp}\rangle|$ throughout, giving
\begin{equation*}
|I_2|\leq C+\frac{1}{4}\int_{A_{r,R}(x_0)}|\nabla \phi|^2\,dx.
\end{equation*}
The desired lower bound is then estimated by
\begin{equation*}
\int_{A_{r,R}}|\nabla u|^2\,dx\geq I_1-|I_2|+I_3\geq D^2\pi\ln\left(\frac{R}{r}\right)-C.
\end{equation*}
\end{proof}
At this point, we are ready to describe a lower bound for the energy on the sets comprising $\mathcal{S}_{\sigma}$ and $\mathcal{S}_{\sigma}^{g,s}$ as defined in \eqref{def:sigma} and \eqref{def:ssigma} respectively.
\begin{lem}\label{lem:onsigmas}[Strong Tangential Case] Suppose $\e_n$ is the subsequence taken in Proposition \ref{prop:sigmaballs} and let $d_i=\deg(u_{\e_n};\partial B_{\sigma}(p_i))$ and $D_j=\ind(u_{\e_n};\partial B_{\sigma}(q_j)\cap\Omega)$. There exists a constant $C$, independent of $\e_n$ and $\sigma$ such that:
\begin{align*}
E_{\e_n}(u_{\e_n};B_{\sigma}(p_i))\geq \pi|d_i|\ln\left(\frac{\sigma}{\e_n}\right)-C,\quad i=1,\ldots,I,\\[0.5em]
E_{\e_n}(u_{\e_n};B_{\sigma}(q_j))\geq \frac{\pi}{2}|D_j|\ln\left(\frac{\sigma}{\e_n}\right)-C,\quad j=1,\ldots,J.
\end{align*}
[Weak Tangential Case] Suppose $\e_n$ is the subsequence taken in Proposition \ref{prop:sigmaballs} and let $d_i=\deg(u_{\e_n};\partial B_{\sigma}(p_i))$ and $D_j=\ind(u_{\e_n};\partial B_{\sigma^s}(q_j)\cap\Omega)$. There exists a constant $C$, independent of $\e_n$ and $\sigma$ such that:
\begin{align*}
E_{\e_n}^{g,s}(u_{\e_n};B_{\sigma}(p_i))\geq \pi|d_i|\ln\left(\frac{\sigma}{\e_n}\right)-C,\quad i=1,\ldots,I,\\[0.5em]
E_{\e_n}^{g,s}(u_{\e_n};B_{\sigma^s}(q_j))\geq \frac{\pi s}{2}|D_j|\ln\left(\frac{\sigma}{\e_n}\right)-C,\quad j=1,\ldots,J.
\end{align*}
\end{lem}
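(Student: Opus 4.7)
The plan is to derive each lower bound by applying Theorem~\ref{ineq:localanulus} to a single annulus that fits inside the corresponding ball of $\mathcal{S}_\sigma$ (resp.\ $\mathcal{S}_\sigma^{g,s}$) and contains the cluster of small bad balls of Proposition~\ref{prop:uniformballbound} at its center. Fix $\sigma\in(0,\sigma_0)$, consider an interior cluster $B_\sigma(p_i)$, and set $d_i=\deg(u_{\e_n};\partial B_\sigma(p_i))$. By Proposition~\ref{prop:sigmaballs}, for $n$ large every small bad ball of Proposition~\ref{prop:uniformballbound} intersecting $B_\sigma(p_i)$ lies entirely inside it, and no other small bad balls touch this region; in particular, after passing to a further subsequence, there exists $\rho_n\to 0$ such that $|u_{\e_n}|\geq\tfrac12$ on the annulus $A_{\rho_n,\sigma}(p_i)$ and the hypotheses of Theorem~\ref{ineq:localanulus} are satisfied, thanks to the total-energy upper bound of Proposition~\ref{prop:upperbound} together with the $\eta$-compactness estimate \eqref{ineq:ceta2} (or \eqref{ineq:ceta} in the weak case).

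Applying Theorem~\ref{ineq:localanulus}(i) to this annulus yields
\begin{equation*}
\int_{A_{\rho_n,\sigma}(p_i)}|\nabla u_{\e_n}|^2\,dx\;\geq\;2d_i^2\pi\ln(\sigma/\rho_n)-C.
\end{equation*}
If a single inner bad ball of Proposition~\ref{prop:uniformballbound} accumulates at $p_i$, one may take $\rho_n$ proportional to $\e_n$, so that $\ln(\sigma/\rho_n)=\ln(\sigma/\e_n)+O(1)$. When several inner bad balls cluster at $p_i$, a Jerrard--Sandier style ball-merging procedure is invoked: organize the small bad balls into nested dyadic shells centered near $p_i$, apply Theorem~\ref{ineq:localanulus} on each shell separately, and telescope the bounds down to the scale $\e_n$ using the degree additivity of Lemma~\ref{lem:addthemup} to match cumulative degrees at successive scales. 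Either way we obtain $E_{\e_n}(u_{\e_n};B_\sigma(p_i))\geq d_i^2\pi\ln(\sigma/\e_n)-C$, and since $d_i\in\mathbb{Z}$ gives $d_i^2\geq|d_i|$ (trivially when $d_i=0$), the interior bound follows.

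The boundary clusters are treated identically via Theorem~\ref{ineq:localanulus}(ii), with the boundary index $D_j$ in place of $d_i$. In the strong tangential case the inner boundary bad balls have radius $\lambda\e_n$, and the annular bound on $\omega_\sigma(q_j)$ yields $E_{\e_n}(u_{\e_n};B_\sigma(q_j))\geq\tfrac{1}{2}D_j^2\pi\ln(\sigma/\e_n)-C$. In the weak case the relevant ball is $B_{\sigma^s}(q_j)$, the inner boundary bad balls have radius $\lambda\e_n^s$, and the identity $\ln(\sigma^s/\e_n^s)=s\ln(\sigma/\e_n)$ produces the extra factor of $s$ in the statement; the nonnegative surface penalty in $E_{\e_n}^{g,s}$ is simply discarded. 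The main obstacle is the ball-merging step when multiple small bad balls accumulate at a single $p_i$ or $q_j$: the naive sum of annular lower bounds over each inner bad ball double-counts the nested overlap and inflates the constant into something $\sigma$-dependent, so the dyadic-shell construction coupled with Lemma~\ref{lem:addthemup} is exactly what lets one repackage everything into the single outer degree $d_i$ (respectively the outer boundary index $D_j$) appearing in the statement.
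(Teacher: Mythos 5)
Your proposal is correct and follows essentially the same route as the paper: the paper does not carry out the argument in detail but cites Sandier's ball-expansion-and-merging method (as adapted to boundary vortices in \cite{alama2021boojum} and \cite[Lemma 7.1]{alama2020thin}), built on the annular logarithmic lower bounds of Theorem~\ref{ineq:localanulus} and localized to each $\sigma$-ball via Lemma~\ref{lem:addthemup} — precisely the combination of ingredients (single-annulus estimate when one bad ball accumulates, dyadic-shell merging with degree additivity otherwise, and the $s$-rescaling $\ln(\sigma^s/\e_n^s)=s\ln(\sigma/\e_n)$ for boundary clusters in the weak case) that you describe.
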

The proof for this lemma comes from a result developed by Sandier \cite{sandier1998lower} (Jerrard \cite{jerrard1999lower} gives a similar result) which uses techniques involving the logarithmic lower bound as found in Theorem \ref{ineq:localanulus}. The method involves a two-step approach where balls containing subsets of $S_{\e}$ (or $S_{\e}^{g,s}$) are expanded and fused such that the energy on these balls can be estimated from below while preserving the natural scaling by $\e$. A fundamental difference between our work and that of Sandier's are details regarding boundary data. Indeed, Sandier's work assumes Dirichlet boundary conditions and thus one does not obtain boundary vortices in this case. For our problem, boundary vortices are expected and thus some extra care needs to be taken when one performs the ball expansion and fusion argument. We refer the reader to \cite{alama2021boojum} and \cite[Lemma 7.1]{alama2020thin} for a proof on how to modify Sandier's result to accommodate for boundary vortices. In particular, the proof not only removes the assumption of Dirichlet boundary data, but also explains how one can deal with the different radial scalings $\e$ and $\e^s$ of the bad balls. However, it is worth noting that the proof of \cite[Lemma 7.1]{alama2020thin} is done in a global sense due to the way boojums must be dealt with. For our case, thanks to Lemma \ref{lem:addthemup}, the arguments of \cite[Lemma 7.1]{alama2020thin} can be applied to each $\sigma$-ball separately which results in Lemma \ref{lem:onsigmas}. \\[1em]
As a consequence of Lemma \ref{lem:onsigmas} and Proposition \ref{prop:upperbound}, we have
\begin{equation}\label{lb:badsetbound1}
\pi\left(\sum_{i=1}^I|d_i|+\frac{1}{2}\sum_{j=1}^J|D_j|\right)|\ln \e_n|-C\leq E_{\e}(u_{\e_n};\mathcal{S}_{\sigma})\leq \pi s\mathcal{D}|\ln\e|+C
\end{equation}
for strong tangential solutions and 
\begin{equation}\label{lb:badsetbound}
\pi\left(\sum_{i=1}^I|d_i|+\frac{s}{2}\sum_{j=1}^J|D_j|\right)|\ln \e_n|-C\leq E_{\e}^{g,s}(u_{\e_n};\mathcal{S}_{\sigma}^{g,s})\leq \pi s\mathcal{D}|\ln\e|+C
\end{equation}
for weak tangential solutions. Using these estimates, we find that each degree $d_i$ and boundary index $D_j$ are uniformly bounded in $\e$ and therefore can be taken to be constant along a subsequence $\e_n\to 0$. It is also clear from \eqref{lb:badsetbound1} and \eqref{lb:badsetbound} that all $\sigma$-balls constituting $\mathcal{S}_{\sigma}$ and $\mathcal{S}_{\sigma}^{g,s}$ respectively, which satisfy $d_i=D_j=0$ do not contribute substantial energy. Therefore, the associated balls can be seen to belong to the set where $u_{\e_n}$ converges. By relabeling the approximate vortices if necessary, we define
\begin{equation*}
\Sigma :=\{p_1,\ldots,p_I\}\cup\{q_1,\ldots,q_J\}
\end{equation*}
to be the collection of all $\sigma$-ball centers with non-trivial degree or boundary index. Upon dividing by $\pi|\ln\e|$ and taking $\e\to 0$ in \eqref{lb:badsetbound1} and \eqref{lb:badsetbound}, it holds that
\begin{equation*}
\sum_{i=1}^I|d_i|+\frac{1}{2}\sum_{j=1}^J|D_j|\leq \mathcal{D}
\end{equation*}
for strong tangential solutions and
\begin{equation*}
\sum_{i=1}^I|d_i|+\frac{s}{2}\sum_{j=1}^J|D_j|\leq s\mathcal{D}
\end{equation*}
for weak tangential solutions. Using identity \eqref{eq:windingidentity} in combination with the above inequalities shows all integers $d_i$ and $D_j$ must be positive (since we've assumed $\mathcal{D}>0$). In fact, we have the equalities
\begin{equation}\label{eq:D}
\mathcal{D}=\sum_{i=1}^Id_i+\frac{1}{2}\sum_{j=1}^JD_j
\end{equation}
and
\begin{equation}\label{sD}
s\mathcal{D}=\sum_{i=1}^Id_i+\frac{s}{2}\sum_{j=1}^JD_j
\end{equation}
for the strong and weak cases respectively. This allows us to conclude the following: Let
\begin{equation*}
\Omega_{\sigma}:=\Omega\setminus\overline{\mathcal{S}_{\sigma}},\quad \Omega_{\sigma}^{g,s}:=\Omega\setminus\overline{\mathcal{S}_{\sigma}^{g,s}}.
\end{equation*} 
\begin{cor}\label{ineq:energysigmabound} [Strong Tangential Case] For any $\sigma\in(0,\sigma_0)$, there exists a constant $C$ independent of $\e$ and $\sigma$ such that
\begin{equation*}
E_{\e_n}(u_{\e_n};\Omega_{\sigma})\leq \pi \mathcal{D}|\ln\sigma|+C.
\end{equation*}
Moreover, there is a constant $C'$ independent of $\e$ such that
\begin{equation*}
\frac{1}{4\e_n^2}\int_{\Omega}\left(1-|u_{\e_n}|^2\right)^2dx\leq C'.
\end{equation*}
[Weak Tangential Case] For any $\sigma\in(0,\sigma_0)$, there exists a constant $C$ independent of $\e$ and $\sigma$ such that
\begin{equation*}
E_{\e_n}^{g,s}(u_{\e_n};\Omega_{\sigma}^{g,s})\leq \pi s\mathcal{D}|\ln\sigma|+C.
\end{equation*}
There is also a constant $C'$ independent of $\e$ such that
\begin{equation*}
\frac{1}{4\e_n^2}\int_{\Omega}\left(1-|u_{\e_n}|^2\right)^2dx+\frac{1}{2\e_n^s}\int_{\Gamma}\langle u_{\e_n},g^{\perp}\rangle^2\,ds\leq C'.
\end{equation*}
\end{cor}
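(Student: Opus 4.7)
For the first inequality, the plan is straightforward subtraction. Proposition \ref{prop:upperbound} supplies the upper bound $E_{\e_n}(u_{\e_n})\leq\pi\mathcal{D}|\ln\e_n|+C$ (resp.\ $\pi s\mathcal{D}|\ln\e_n|+C$ in the weak case). Lemma \ref{lem:onsigmas}, summed over the bad-ball collection $\mathcal{S}_\sigma$ (resp.\ $\mathcal{S}_\sigma^{g,s}$), combined with the topological identity \eqref{eq:D} (resp.\ \eqref{sD}) and the positivity of $d_i,D_j$ already established, yields
\begin{equation*}
E_{\e_n}(u_{\e_n};\mathcal{S}_\sigma)\geq\pi\mathcal{D}\ln(\sigma/\e_n)-C
\end{equation*}
and analogously $\pi s\mathcal{D}\ln(\sigma/\e_n)-C$ for the weak case. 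Since $\ln(\sigma/\e_n)=|\ln\e_n|-|\ln\sigma|$, subtracting from the energy upper bound produces the claimed bounds on $E_{\e_n}(u_{\e_n};\Omega_\sigma)$ and $E^{g,s}_{\e_n}(u_{\e_n};\Omega_\sigma^{g,s})$.

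For the second inequality, the plan is to produce a matching Dirichlet lower bound on $\Omega$ and subtract from the energy upper bound. In the strong case, apply Theorem \ref{ineq:localanulus} on the annuli $A_{\lambda\e_n,R_0}(p_i)$ and $A_{\lambda\e_n,R_0}(q_j)$ for a fixed small $R_0$ chosen so that the balls $B_{R_0}$ around distinct defects are disjoint. The annular contributions to the Dirichlet integral are $d_i^2\pi\ln(R_0/(\lambda\e_n))$ and $\tfrac{1}{2}D_j^2\pi\ln(R_0/(\lambda\e_n))$. Using the elementary inequalities $d_i^2\geq|d_i|$ and $D_j^2\geq|D_j|$ (which hold since $d_i,D_j$ are integers) together with identity \eqref{eq:D} yields
\begin{equation*}
\frac{1}{2}\int_\Omega|\nabla u_{\e_n}|^2\,dx\geq\pi\mathcal{D}|\ln\e_n|-C.
\end{equation*}
Subtracting this from the upper bound $E_{\e_n}(u_{\e_n})\leq\pi\mathcal{D}|\ln\e_n|+C$ isolates the potential term and bounds it by a constant, which is the claim. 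The weak-tangential case is identical in structure: the boundary annular radii are now $\lambda\e_n^s$, the identity \eqref{sD} is used in place of \eqref{eq:D}, and the same subtraction from $E^{g,s}_{\e_n}(u_{\e_n})\leq\pi s\mathcal{D}|\ln\e_n|+C$ controls both the potential and the boundary-penalty term together.

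The principal technical step is verifying the hypotheses of Theorem \ref{ineq:localanulus} on each annulus. The pointwise conditions $\tfrac{1}{2}\leq|u_{\e_n}|\leq 1$ on $A_{\lambda\e_n,R_0}(x_0)$ and (in the weak case) $|\langle u_{\e_n},g^{\perp}\rangle|\leq 1/4$ on $\Gamma_{\lambda\e_n^s,R_0}^{\pm}(q_j)$ follow from Lemma \ref{lem:bounded} and the very definition of bad ball in Proposition \ref{prop:uniformballbound}. The local potential bound $\tfrac{1}{\e_n^2}\int_{\omega_{\e_n^\gamma}(x_0)}(1-|u_{\e_n}|^2)^2\,dx\leq K$ follows by combining the global energy bound of Proposition \ref{prop:upperbound} with a mean-value-in-radius choice of the kind already used in the proofs of Theorem \ref{thm:eta} and Lemma \ref{lem:onsigmas}.
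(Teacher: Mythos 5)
Your first inequality is handled exactly as the paper does it: sum the lower bounds of Lemma~\ref{lem:onsigmas} over the $\sigma$-balls, use the positivity of the $d_i,D_j$ together with \eqref{eq:D} (resp.\ \eqref{sD}) to identify $\sum|d_i|+\tfrac12\sum|D_j|$ with $\mathcal{D}$ (resp.\ $\sum|d_i|+\tfrac{s}{2}\sum|D_j|$ with $s\mathcal{D}$), and subtract from the upper bound of Proposition~\ref{prop:upperbound}. This part is correct, and the bookkeeping $\ln(\sigma/\e_n)=|\ln\e_n|-|\ln\sigma|$ with the $\sigma$-independence of the constant in Lemma~\ref{lem:onsigmas} is exactly what makes the resulting constant independent of $\sigma$.

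For the potential bound your strategy (a Dirichlet-only lower bound matching $\pi\mathcal{D}|\ln\e_n|$ to $O(1)$, then subtraction) is the right one, but there is a genuine gap in the step where you apply Theorem~\ref{ineq:localanulus} on the annuli $A_{\lambda\e_n,R_0}(p_i)$ and $A_{\lambda\e_n,R_0}(q_j)$ centered at the \emph{static} defect points of Proposition~\ref{prop:sigmaballs}. The hypothesis $|u_{\e_n}|\geq 1/2$ on such an annulus does \emph{not} follow from the definition of bad ball: each static $\sigma$-ball is only known to contain a cluster of up to $\tilde N$ bad balls of radius $\lambda\e_n$ (resp.\ $\lambda\e_n^s$) whose mutual separations exceed $8\lambda\e_n$ but may be $o(1)$, so the annulus from $\lambda\e_n$ out to $R_0$ around a cluster center will in general contain other bad balls on which $|u_{\e_n}|<1/2$. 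Moreover the individual degrees inside one cluster need not all have the sign of the cluster degree, so you cannot simply replace the big annulus by one small annulus per bad ball: the sum of the resulting logarithms falls short of $|\ln\e_n|$ whenever the intra-cluster separations are of an intermediate scale. Repairing this requires the multi-scale ball-growth/merging argument of Sandier--Jerrard (the same machinery behind Lemma~\ref{lem:onsigmas}, via \cite[Lemma 7.1]{alama2020thin}), applied so as to produce a lower bound on the Dirichlet part alone; the elementary inequality $\sum_k a_k^2\geq|\sum_k a_k|$ for integers is then what propagates the degree through the successive mergings. Your verification of the remaining hypotheses of Theorem~\ref{ineq:localanulus} (the local potential bound via Lemma~\ref{lem:estimate} and a mean-value choice of radius) is fine, but as written the key pointwise hypothesis fails and the argument does not close without the merging step.
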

Upon taking an appropriate subsequence $\sigma_n\to 0$, Corollary \ref{ineq:energysigmabound} and following the methods of \cite{bbh} and \cite{struwe1994asymptotic} allows us to conclude that $u_{\e_n}\w u_0$ weakly in $H^1_{loc}(\overline{\Omega}\setminus\Sigma;\mathbb{R}^2)$ as $\e_n\to 0$ where $u_0\in H^1(\overline{\Omega}\setminus\Sigma;\mathbb{S}^1)$ is harmonic. By observing annular regions in $\Omega_{\sigma}$ (and $\Omega_{\sigma}^{g,s}$) and applying Corollary \ref{ineq:energysigmabound} and Theorem \ref{ineq:localanulus}, it is easy to show that vortices of degree or boundary index larger than $1$ require too much energy, and therefore we conclude $d_i=D_j=1$ for all $1\leq i\leq I$, $1\leq j\leq J$. In light of equation \eqref{eq:D}, making the further assumption that $\mathcal{D}=1$ forces that either $\Sigma=\{p_1\}\subset\Omega$ or $\Sigma=\{q_1,q_2\}\subset\Gamma$ which finishes the proof of Theorem \ref{thm:big1}. Moreover, equation $\eqref{sD}$ can be rewritten
\begin{equation*}
s\mathcal{D}=\sum_{i=1}^Id_{i}+\frac{s}{2}\sum_{j=1}^JD_j=(1-s)\sum_{i=1}^Id_i+s\mathcal{D}
\end{equation*}
which immediately implies $\Sigma\subset\Gamma$ whenever $0<s<1$. The fact that each $D_j=1$ also implies $|\Sigma|=J=2\mathcal{D}$ which then completes the proof of Theorem \ref{thm:big2}.
\section{Defect Locations on a Disc when $g=\tau$}
This final section is dedicated to analyzing a strong tangential anchoring example. The primary point of studying this case is to shed light on the fact that certain domain geometries may exist for which boundary vortices could still be energetically preferable to those in the interior, even when all vortices are given equal scaling. We consider the special case where $g=\tau$, the positively oriented unit tangent vector to $\Gamma$, and take $\Omega=B_1(0)$ to be the unit disc for simplicity. In this scenario, $\mathcal{D}=\deg(\tau,\Gamma)=1$ and so in light of equation \eqref{eq:windingidentity} there are only two possibilities for defect locations, exactly one in the interior or exactly two along the boundary. To investigate this further, we observe a renormalized energy.\\[0.5em]
\underline{One Defect in $\Omega$}\\[0.5em]
Let $p\in\Omega$ denote the interior singularity and assume the limiting harmonic map $u_0=\tau$ on $\Gamma$. Following \cite[Section 6]{alama2015weak} and \cite{riviere1999}, consider the solution $\Phi_p$ to
\begin{align*}
\left\{
\begin{array}{ll}
\Delta \Phi_p=2\pi\delta_{p}(x) & \mbox{in}\ \Omega,\\[0.5em]
\dfrac{\partial\Phi_p}{\partial n}=g\times g_{\tau} & \mbox{on}\ \Gamma,
\end{array}
\right.
\end{align*}
with associated asymptotic energy expansion 
\begin{equation*}
E_{\e}^{g,1}(u_{\e})=\pi|\ln\e|+W(p)+c_{\Omega}+o(1)
\end{equation*}
where
\begin{equation*}
W(p)=\lim_{\rho\to 0}\left(\frac{1}{2}\int_{\Omega_{\rho}}|\nabla \Phi_p|^2\,dx -\pi\ln\left(\frac{1}{\rho}\right)\right)
\end{equation*}
is the renormalized energy and $c_{\Omega}$ is the vortex core energy associated to $p$. It can be shown (see \cite{bbh} for example) that the renormalized energy $W$ has a minimum value of zero at the origin $p=0$.\\[0.5em]
\underline{Two Defects on $\Gamma$}\\[0.5em]
Let $q_1, q_2\in\Gamma$ be the boundary singularities and consider the PDE
\begin{align*}
\left\{
\begin{array}{ll}
\Delta \Phi_q=0 & \mbox{in}\ \Omega,\\[0.5em]
\dfrac{\partial\Phi_q}{\partial n}=g\times \partial_{\tau}g -\pi(\delta_{q_1}(x)+\delta_{q_2}(x))& \mbox{on}\ \Gamma
\end{array}
\right.
\end{align*}
which has solution $\Phi_{q}(x)=\ln|x-q_1|+\ln|x-q_2|$. The energy expansion and renormalized energy are
\begin{align*}
E_{\e}^{g,1}(u_{\e})&=\pi|\ln\e|+W(q_1,q_2)+2c_{\Gamma}+o(1),\\
W(q_1,q_2)&=\lim_{\rho\to 0}\left(\frac{1}{2}\int_{\Omega_{\rho}}|\nabla \Phi_q|^2\,dx -\pi\ln\left(\frac{1}{\rho}\right)\right),
\end{align*}
and $c_{\Gamma}$ represents the vortex core energy associated to each $q_j$. By the identity
\begin{equation*}
\int_{\Omega_{\rho}}|\nabla \Phi_q|^2\,dx =\sum_{j=1}^2\int_{\partial B_{\rho}(q_j)\cap\Omega}\Phi_q\frac{\partial\Phi_q}{\partial n_{q_j}}ds+\int_{\Gamma\setminus(\Gamma_{\rho}(q_1)\cup \Gamma_{\rho}(q_2))}\Phi_q\frac{\partial\Phi_q}{\partial n}ds
\end{equation*}
it can be shown via direct calculation that
\begin{equation*}
W(q_1,q_2)=-\pi\ln|q_1-q_2|
\end{equation*} 
which is minimized whenever $q_1$ and $q_2$ are antipodal. In particular,
\begin{equation*}
\min_{q_1,q_2\in\Gamma}W(q_1,q_2)=-\pi\ln|2q_1|<0=\min_{p\in\Omega}W(p).
\end{equation*}
This calculation suggests that the case where $\Sigma=\{q_1,q_2\}$ gives the energetically preferable singularity allocation. To conclude that this is indeed the case, it must be shown that the core energy associated to a boundary vortex is not too large compared to $c_{\Omega}/2$. Although we do not have a rigorous proof for this, we believe it is possible to show using estimates such as those found in \cite{alama2021boojum}.
\bibliographystyle{alpha}

\end{document}